\newtheorem{lem}{Lemma}[section]
\newtheorem{theorem}[lem]{Theorem}
\newtheorem{prop}[lem]{Proposition}
 \def\b{\beta}   
\def\r{\rho} \def\s{\sigma}
\def\olg{\overline G}
 \def\lg{\langle} \def\rg{\rangle}
\def\nd{\mathrel{\bigm|\kern-.7em/}}
\def\f{\noindent}
\def\PSL{\hbox{\rm PSL}}
 \def\PGL{\hbox{\rm PGL}}
\def\Sz{\hbox{\rm Sz}}
\def\Aut{\hbox{\rm Aut}}
\def\Ker{\hbox{\rm Ker}}
\def\Aut{\hbox{\rm Aut}}
\def\PGL{\hbox{\rm PGL}}
\def\PGaL{\hbox{\rm P\mbox{$\Gamma$}L}}
\def\mod{\hbox{\rm mod }}
\def\PGL{\hbox{\rm PGL}}
\def\PGL{\hbox{\rm PGL}}
\def\mz{{\mathbb Z}}
\def\P{\mathcal {P}}
\begin{document}

\title{String C-groups of order $4p^m$}

\author{Dong-Dong Hou}
\address{Dong-Dong Hou, Department of Mathematics, Shanxi Normal University, TaiYuan,
100044, P.R. China}
\email{holderhandsome@bjtu.edu.cn}

\author{Yan-Quan Feng}
\address{Yan-Quan Feng, Department of Mathematics, Beijing Jiaotong University, Beijing,
100044, P.R. China}
\email{yqfeng@bjtu.edu.cn}

\author{Dimitri Leemans}
\address{Dimitri Leemans, D\'epartement de Math\'ematique, Universit\'e Libre de Bruxelles, C.P.216, Boulevard du Triomphe, 1050 Bruxelles Belgium}
\email{leemans.dimitri@ulb.be}

\author{Hai-Peng Qu}
\address{Hai-Peng Qu, Department of Mathematics, Shanxi Normal University, TaiYuan,
100044, P.R. China}
\email{orcawhale@163.com}

\date{}

\keywords{Regular polytopes,  $p$-group, automorphism group, soluble group}
\subjclass{ 20B25, 20D10, 52B10, 52B15}
\begin{abstract}
Let $(G,\{\r_0, \r_1, \r_2\})$ be a string C-group of order $4p^m$ with type $\{k_1, k_2\}$ 
for $m \geq 2$, $k_1, k_2\geq 3$ and $p$ be an odd prime. Let $P$ be a Sylow $p$-subgroup of $G$. 
We prove that $G \cong P \rtimes (\mathbb{Z}_2 \times \mathbb{Z}_2)$, $d(P)=2$, 
and up to duality, $p \mid k_1, 2p \mid k_2$. Moreover, we show that if $P$ is abelian, then $(G,\{\r_0, \r_1, \r_2\})$ is tight and hence known. 
In the case where $P$ is nonabelian, we construct an infinite family of string C-group with type $\{p, 2p\}$ of order $4p^m$ where $m \geq 3$. 
%Let $\mathcal{P}$ be an abstract regular $3$-polytope with type $\{k_1, k_2\}$ of order $4p^m$
%for $m \geq 2$ and $p$ be an odd prime. In this paper, we prove that $\Aut(\mathcal{P})=P \rtimes (\mathbb{Z}_2 \times \mathbb{Z}_2)$ and $d(P)=2$, where $P \in Syl_p(\Aut(\mathcal{P}))$. Moreover, up to duality, $p \mid k_1, 2p \mid k_2$.
%As an application, a classification of regular $3$-polytope of order $4p^m$ is given for $P$ is abelian. For $P$ is non-abelian,
%we construct an infinite families of regular $3$-polytope with type $\{p, 2p\}$ of order $4p^m$where $m \geq 3$. 
\end{abstract}
\maketitle

\section{Introduction}

A {\em string C-group representation}, or {\em string C-group} for short, is a pair $(G,S)$ where $G$ is a group and $S:=\{\r_0, \ldots, \r_{n-1}\}$ is a finite ordered set of $n$ involutions of $G$ such that
\begin{enumerate}
    \item $\langle S \rangle = G$;
    \item For each $i,j\in \{0, \ldots, n-1\}$ with $|i-j| > 1$, we have $\r_i\r_j = \r_j\r_i$;
    \item For each $I,J \subseteq \{0, \ldots, n-1\}$, we have that 
    $\langle \r_i \; | \; i \in I\rangle \cap \langle \r_j \; | \; j \in J \rangle = \langle \r_k \; | \; k \in I \cap J\rangle$. 
\end{enumerate}
Property 2 above is usually called the {\em string property} while property 3 is usually called the {\em intersection property}.
A pair $(G,S)$ satisfying 1 and 2 above is also called a {\em string group generated by involutions} or {\em sggi} for short.
The cardinality of $G$ is called the {\em order} of $(G,S)$ while the cardinality of $S$ is called the {\em rank} of $(G,S)$.
The {\em Schl\"afli type} of $(G,S)$ is the ordered set $\{o(\r_i\r_{i+1})\; |\; i=0, \ldots, n-2\}$ where $o(g)$ denoted the order of the element $g\in G$.
If $|G| = 2\Pi_{i=0}^{n-2}o(\r_i\r_{i+1})$ then $(G,S)$ is called {\em tight}.

String C-groups permit to construct abstract regular polytopes. The study of abstract regular polytopes has a rich history that has been described comprehensively by McMullen and Schulte~\cite{ARP}. In particular, in the latter reference, the authors prove that abstract regular polytopes and string C-groups are essentially the same by showing on the one hand that, given an abstract regular polytope and a flag of that polytope, one can construct a string C-group, and on the other hand, given a string C-group, one can construct an abstract regular polytope.

It is a natural question to try to describe all pairs $(\P, G)$, where $\P$ is a regular polytope and $G$ is the automorphism group
acting transitively on the flags of $\P$. This is equivalent to describe all pairs $(G,S)$ that are string C-groups for given groups $G$.

Michael Hartley built in~\cite{Halg} an atlas of all string C-groups of order less that 2000 and not equal to 1024. Later on, Gomi {\em et al} \cite{sc1024} determined the non-degenerate\footnote{A string C-group is non-degenerate if its Schläfli type does not contain 2's.} string C-groups of order $1024$. Independently, Marston Conder computed all string C-groups of order at most 2000~\cite{atles1}.
Recently, P. Potočnik {\em et al.}~\cite{GroupOrder6000} computed all regular maps with rotational automorphism group of order at most 6000.

An interesting case consists in the pairs $(G, S)$ with $G$ simple or almost simple.
The atlas~\cite{atles2} contains all regular polytopes whose automorphism group $G$ is an almost simple group such that $H \leq G\leq \Aut(H)$ and $H$ is a simple group of order less than one million. Thanks to the analysing of the data collected in that atlas, striking results have been obtained for the symmetric groups $S_n$ and alternating groups $A_n$. Fernandes~{\em et al.}~\cite{fl,flm,sympolcorr} classified string C-groups of ranks $n-1$, $n-2$, $n-3$ and $n-4$ for $S_n$ respectively, and
Cameron {\em et al.}~\cite{CFLM2017} showed that the highest rank of an abstract regular polytope for $A_n$ is $\lfloor (n-1)/2 \rfloor$ when $n\geq 12$, which was known to be sharp by Fernandes~{\em et al.}~\cite{flm1,flm2}.
More recently, Cameron et al.~\cite{CFL2024} showed that the number of abstract regular polytopes of rank $n-k$ for $S_{n}$ is 
a constant independent of $n$ when $n$ is at least $2k+3$.
For a prime power $q$, results about the highest possible rank of a regular polytope with given automorphism group were obtained for the linear groups $\PSL(2, q)$ in \cite{psl2q}, $\PGL(2,q)$ in \cite{pgl2q}, $\PSL(3, q)$ and $\PGL(3, q)$ in \cite{psl3q}, $\PSL(4, q)$ in \cite{psl4q}, for the Suzuki simple groups $^2B_2(q) = \Sz(q)$ in \cite{suzuki}, for the small Ree groups $^2G_2(q) = R(q)$ in~\cite{LSV2018} and for some symplectic and orthogonal groups in~\cite{brooksbank2021}. Furthermore, Connor {\em et al.}~\cite{soclepsl2q} classified abstract regular polytopes for almost simple groups $G$ with $\PSL(2, q) \leq G \leq \PGaL(2, q)$. For a broader survey on these kind of results, we refer to~\cite{Lee2019}.
Another interesting case is constituted by the string C-groups $(G, S)$ with $G$ soluble.
When it comes to soluble groups, except for abelian groups,
the first families that come to mind are groups of orders $2^n$ or $2^np^m$, where $p$ is an odd prime. Schulte and Weiss mentioned in~\cite{Problem} that the order $2^n$ and $2^np$ prove to be more difficult than others and suggested in~\cite[Problem 30]{Problem} to try to classify string C-groups with these orders.

Conder constructed in~\cite{SmallestPolytopes} an infinite family of string C-groups of Schläfli type $\{4, 4, \cdots, 4\}$, rank $n$ and order $2\cdot 4^{n-1}$. These turn out to be the string C-groups of smallest possible order for a given rank $n\geq 9$ as Conder showed in the same paper. The groups of these string C-groups are in fact 2-groups and the string C-groups are tight.

Hou et al.~\cite{HFL, HFL1} showed that all possible Schl\"afli types can be achieved for string C-groups of order $2^n$ for $n \geq 5$. 
In the case of groups of order $2^np$, families of tight string C-groups of rank three with Schl\"afli type $\{k_1, k_2\}$ (and order $2k_1k_2$), were obtained by Cunningham and Pellicer~\cite{GD2016}.
Hou et al.~\cite{HFL2} constructed an infinite family of string C-groups of rank three, of order $2^np$ with type $\{2^{l_{1}}, 2^{l_{2}}p\}$ where $l_1, l_2 \geq 2$ and $l_1+l_2 \leq n-1$. They also gave a family of string C-groups of rank three and  order $3\cdot2^n$ with type $\{6, 6\}$ for $n \geq 5$.

As a next step, it is natural to wish to construct string C-groups of order $2^np^m$ for $m \geq 2$.
%There are not that many theoretical constructions of regular polytopes $\P$ of order $2^np^m$ for $m \geq 2$.
Note that if a string C-group has rank at least three, then $4$ divides its order.
Inspired by this, we assume that $|G|=4p^m$ and we focus on the rank three case.
In this paper, we first prove the following theorem (where $d(G)$ is the size of a smallest generating set of $G$).

\begin{theorem}\label{maintheorem}
Let $m\geq 2$ and let $p$ be an odd prime. Let $(G,\{\r_0, \r_1, \r_2\})$ be a non degenerate string C-group of rank three and order $4p^m$ with Schläfli type $\{k_1, k_2\}$. Let $P$ be a Sylow $p$-subgroup of $G$. Then
\begin{itemize}
\item [\rm {(1)}] $G\cong P \rtimes \lg \r_0,\r_2\rg$;
\item [\rm {(2)}] Up to duality, $p \mid k_1, 2p \mid k_2$;
\item [\rm {(3)}] $d(P)=2$.
\end{itemize}
\end{theorem}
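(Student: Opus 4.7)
The overall strategy is to first pin down the normal structure $G = P \rtimes (\mathbb{Z}_2 \times \mathbb{Z}_2)$, and then to translate the constraints on $\r_0, \r_1, \r_2$ into constraints on the orders $k_1, k_2$ and onto a natural $\mathbb{F}_p[H]$-module attached to $P$, where $H := \langle \r_0, \r_2 \rangle$.

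For part (1), I first argue $P \trianglelefteq G$ via Sylow: $n_p$ divides $4$ and is $\equiv 1 \pmod p$, which forces $n_p = 1$ for $p \geq 5$. For $p = 3$ the only alternative $n_p = 4$ is excluded by considering the homomorphism $G \to S_4$ induced by conjugation on Sylows: the image is a transitive subgroup of $S_4$ of order dividing $12$, so either the kernel equals $P$ (forcing $P \trianglelefteq G$, contradicting $n_p = 4$) or the image is $A_4$, which is impossible since $A_4$ is not generated by involutions (they all lie in $V_4$). With $P$ normal, $H$ is abelian by the string property and generated by two distinct non-trivial involutions (distinctness follows from the intersection property), so $H \cong \mathbb{Z}_2 \times \mathbb{Z}_2$; since $|P| \cdot |H| = |G|$ and $\gcd(|P|, |H|) = 1$, we obtain $G = P \rtimes H$.

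For part (2), I classify element orders in $P \rtimes H$: a direct computation shows any element squares into $P$ (since every $h \in H$ has $h^2 = 1$), so every element of $G$ has order in $\{1, 2\} \cup \{p^b, 2p^b : 1 \leq b \leq m\}$. The non-degeneracy hypothesis $k_i \geq 3$ then forces $p \mid k_i$ for $i \in \{1, 2\}$. Projecting onto $G/P \cong H$, the three images $\bar\r_0, \bar\r_1, \bar\r_2$ must together generate $\mathbb{Z}_2 \times \mathbb{Z}_2$, which rules out $\bar\r_0 = \bar\r_1 = \bar\r_2$; so at least one of $\r_0\r_1, \r_1\r_2$ lies outside $P$, and interchanging $\r_0 \leftrightarrow \r_2$ by duality if necessary we arrange $\r_1\r_2 \notin P$. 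Combined with $p \mid k_2$, this yields $2p \mid k_2$.

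For part (3), write $\r_1 = g h'$ uniquely with $g \in P$ and $h' \in H$. The relation $\r_1^2 = 1$ gives $h'(g) = g^{-1}$; the case $h' = 1$ forces $g = 1$ and hence $\r_1 = 1$, so $h'$ is one of the three non-trivial involutions of $H$. Since $G = \langle \r_0, \r_2, g \rangle$, the quotient $G/\langle g^G \rangle$ is generated by the images of $\r_0$ and $\r_2$ and so has order dividing $|H| = 4$; combined with $\langle g^G \rangle \subseteq P$ and $[G:P] = 4$, this forces $\langle g^G \rangle = P$. Consequently $\overline{P} := P/\Phi(P)$ is generated as an $\mathbb{F}_p[H]$-module by the image $\bar g$. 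Maschke's theorem (applicable since $\gcd(|H|,p) = 1$) splits $\overline{P}$ into a direct sum of one-dimensional characters $\chi$ of $H$ over $\mathbb{F}_p$, and the relation $h' \cdot \bar g = -\bar g$ forces the $\chi$-component of $\bar g$ to vanish unless $\chi(h') = -1$; exactly two of the four characters of $\mathbb{Z}_2 \times \mathbb{Z}_2$ satisfy this, so $d(P) = \dim_{\mathbb{F}_p} \overline{P} \leq 2$. To exclude $d(P) = 1$, I suppose $P$ cyclic: then $\Aut(P)$ is cyclic, so some involution of $H$ centralises $P$, and non-degeneracy excludes that involution being $\r_0$ or $\r_2$ alone (each would commute with $\r_1$, contradicting $k_i \geq 3$). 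Hence it must be $\r_0\r_2$, and both $\r_0, \r_2$ invert $P$; a short case analysis on $h' \in \{\r_0, \r_2, \r_0\r_2\}$ places $g$ inside $\langle \r_1, \r_2 \rangle$ or $\langle \r_0, \r_1 \rangle$, and hence $\r_0$ (resp.\ $\r_2$) as well, violating the intersection property. The principal obstacle is this Frattini-module step: the key insight is that $P$ is the $G$-normal closure of the single element $g$ read off from $\r_1$, after which the $(-1)$-eigenvalue constraint on $\bar g$ automatically caps $d(P)$ at two.
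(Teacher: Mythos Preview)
Your proof is correct. Parts (1) and (2) essentially coincide with the paper's argument (the paper is slightly terser in (2), asserting directly that $p\nmid k_1$ forces $k_1=2$; your observation that every element of $G$ squares into $P$ makes explicit why $k_1=4$ is impossible).

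The genuine divergence is in part (3). For $d(P)\leq 2$, the paper avoids the Frattini--Maschke machinery entirely: writing $\r_1=u\r_0^i\r_2^j$ with $u\in P$, it picks any $\r_0^{i'}\r_2^{j'}$ completing $\r_0^i\r_2^j$ to a generating set of $H$ and checks by hand that the two-generator subgroup $\langle u,\,u^{\r_0^{i'}\r_2^{j'}}\rangle\leq P$ is already $H$-invariant (both generators are sent to their inverses by $\r_0^i\r_2^j$, and swapped by $\r_0^{i'}\r_2^{j'}$), hence equals $P$. Your route---normal closure, Frattini quotient, character decomposition, and the eigenvalue constraint $\chi(h')=-1$---reaches the same bound and is more structural, but the paper's two-line computation is what your module argument is ultimately encoding. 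For $d(P)\neq 1$, the paper's argument is considerably shorter than your $\Aut(P)$-is-cyclic case analysis: if $P$ is cyclic then $(\r_0\r_1)^2$ and $(\r_1\r_2)^2$ are nontrivial elements of $P$ (since $k_1,k_2\geq 3$ and both square into $P$), so $\langle(\r_0\r_1)^2\rangle\cap\langle(\r_1\r_2)^2\rangle$ contains the unique order-$p$ subgroup of $P$, contradicting $\langle\r_0,\r_1\rangle\cap\langle\r_1,\r_2\rangle=\langle\r_1\rangle$. Your case analysis works, but this direct appeal to the intersection property is the cleaner move.
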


Furthermore, we show that, if $P$ is abelian, then a string C-group representation of $P\rtimes (\mathbb{Z}_2 \times \mathbb{Z}_2)$ is tight, and hence in this case, one can rely on the classification of tight polyhedra of Cunningham and Pellicer~\cite{GD2016}. 

\begin{theorem}\label{maintheorem2}
Let $m\geq 2$ and let $p$ be an odd prime. Let $(G,\{\r_0, \r_1, \r_2\})$ be a non degenerate string C-group of order $4p^m$ with type $\{k_1, k_2\}$. 
Let $P$ be a Sylow $p$-subgroup of $G$. Then $P$ is abelian if and only if $G$ is tight, that is $|G|=2k_1k_2$. Moreover, up to duality and isomorphism, there is a unique string C-group of type $\{2p^{l_1}, p^{l_2}\}$ with $m=l_1+l_2$ and $l_1\leq l_2$. Its group is 
$$G=(\lg x\rg \times \lg y \rg) \rtimes \lg \r_0, \r_2)\rg= \lg \r_0, \r_1 = xy\r_0, \r_2\rg=$$
$$\lg x, y, \r_0, \r_2\; | \; x^{p^{l_1}}, y^{p^{l_2}}, \r_0^2, \r_2^2, [x,y], [\r_0, \r_2], x^{\r_0}=x, x^{\r_2}=x^{-1}, y^{\r_0}=y^{-1}, y^{\r_2}=y^{-1}\rg$$ and $G \cong  (C_{p^{l_1}}\times C_{p^{l_2}})\rtimes (C_2\times C_2).$
%\begin{itemize}
%\item [\rm {(1)}] $G\cong \lg x, y, \r_0,  \r_2| \ x^{p^{l}}, y^{p^{m-l}}, \r_0^2, \r_2^2, [x,y], [\r_0, \r_2], x^{\r_0}=x, x^{\r_2}=x^{-1}, %y^{\r_0}=y, y^{\r_2}=y\rg$;
%$$=(\lg x\rg \times \lg y\rg)\rtimes (\lg \r_0 \rg \times \lg \r_2\rg).$$
%\item [\rm {(2)}] $k_1k_2=2p^m$, that is $(G,\{\r_0, \r_1, \r_2\})$ is tight.
%\end{itemize}

\end{theorem}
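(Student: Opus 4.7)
My plan is to start from Theorem~\ref{maintheorem}, which gives $G = P\rtimes\langle\rho_0,\rho_2\rangle$ with $\langle\rho_0,\rho_2\rangle \cong C_2\times C_2$, $d(P)=2$, and, up to duality, $p\mid k_1$ and $2p\mid k_2$. Write $\rho_1 = g\tau$ with $g \in P$ and $\tau\in\{\rho_0,\rho_2,\rho_0\rho_2\}$; the condition $\rho_1^2=1$ forces $g^\tau = g^{-1}$, and the generation condition $\langle\rho_0,\rho_1,\rho_2\rangle=G$ is equivalent to the $\langle\rho_0,\rho_2\rangle$-normal closure of $g$ being all of $P$.

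For the forward implication, assume $P$ is abelian. Since $p$ is odd, each of $\rho_0,\rho_2$ splits $P$ into its $+1$- and $-1$-eigenspaces; as the two involutions commute, we obtain a joint decomposition $P = P_{++}\oplus P_{+-}\oplus P_{-+}\oplus P_{--}$ indexed by the sign pattern of each involution on each summand. I would then run a case analysis on $\tau$. In the case $\tau=\rho_0\rho_2$, the condition $g^\tau=g^{-1}$ together with the generation requirement forces $g\in P_{+-}\oplus P_{-+}$ with both summands cyclic of total order $p^m$, yielding type $\{2|g_{+-}|,2|g_{-+}|\}$; then $2k_1k_2 = 8p^m>|G|$, contradicting the standard intersection-property lower bound $|G|\geq|\langle\rho_0,\rho_1\rangle|\cdot|\langle\rho_1,\rho_2\rangle|/|\langle\rho_1\rangle|=2k_1k_2$. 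The remaining two cases are interchanged by duality, so WLOG $\tau=\rho_0$. Then the generation condition forces $P_{++}=P_{+-}=1$ and $P_{-+},P_{--}$ cyclic, of orders $p^{l_2}\geq p^{l_1}$ (the inequality itself being forced by $|G|\geq 2k_1k_2$), and a direct computation yields $k_1=p^{l_2}$, $k_2=2p^{l_1}$, whence $|G|=2k_1k_2$ and $G$ is tight.

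For the converse, I would invoke the Cunningham--Pellicer classification of tight polyhedra~\cite{GD2016}: among tight polyhedra of type $\{p^{l_2},2p^{l_1}\}$ the Sylow $p$-subgroup is always abelian, in fact isomorphic to $C_{p^{l_1}}\times C_{p^{l_2}}$. A self-contained alternative sets $x=\rho_0\rho_1$ and $z=(\rho_1\rho_2)^2$; tightness together with the intersection property gives $P = \langle x\rangle\cdot\langle z\rangle$ with cyclic factors of orders $p^{l_2},p^{l_1}$ and trivial intersection, and the identity $\rho_1\rho_2\rho_1 = z\rho_2$ together with $[\rho_0,\rho_2]=1$ can be manipulated to yield $[x,z]=1$, so $P$ is abelian.

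Uniqueness up to duality and isomorphism falls out of the forward analysis: once the four-summand decomposition of $P$ and the sign pattern of $\rho_0,\rho_2$ on the two nontrivial cyclic factors are fixed, the generators of $P$ and the element $\rho_1$ are determined, and a direct identification yields the stated presentation. The most delicate step of the plan is the converse direction done without Cunningham--Pellicer: the dihedral factorisation $G = \langle\rho_0,\rho_1\rangle\cdot\langle\rho_1,\rho_2\rangle$ does not immediately imply that the two cyclic Sylow components centralise one another, so proving $[x,z]=1$ requires careful commutator bookkeeping inside $G$.
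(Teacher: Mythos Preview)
Your proposal is correct and essentially identical to the paper's proof: both decompose the abelian $P$ into its four $\langle\rho_0,\rho_2\rangle$-eigenspaces, rule out the $\tau=\rho_0\rho_2$ (equivalently $P=B\times C$) case via the bound $|G|\geq 2k_1k_2$, verify tightness in the surviving dual cases by direct computation of $k_1,k_2$, and cite Cunningham--Pellicer for the converse and for uniqueness. Your parametrisation by $\tau$ is just a relabelling of the paper's case split on which eigenspace pair is nontrivial, and the self-contained alternative for the converse that you sketch (and honestly flag as delicate) is not attempted in the paper either.
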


%Observe that the structure of the group $G$ obtained in the proof Theorem~\ref{maintheorem2} is fully determined. We get that $G\cong (C_{p^l}\times C_{p^k})\rtimes (C_2\times C_2)$.

Finally, in the case where $P$ is nonabelian, we prove the following theorem. The proof of that theorem is constructive, meaning that we give an explicit way to construct such string C-groups.
\begin{theorem}\label{maintheorem3}
For $m\geq 3$, there exists a string C-group $(G,\{\r_0, \r_1, \r_2\})$ of order $4p^m$ and Schläfli type $\{p, 2p\}$, with a nonabelian Sylow $p$-subgroup.
\end{theorem}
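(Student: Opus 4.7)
The plan is to produce, for each $m \geq 3$, an explicit triple $(P,\{\sigma_0,\sigma_2\},c)$ consisting of a $2$-generated nonabelian $p$-group $P$ of order $p^m$, two commuting involutive automorphisms $\sigma_0,\sigma_2$ of $P$, and an element $c\in P$ of order $p$ satisfying $\sigma_0(c)=c^{-1}$. Once this data is in hand, set $G=P\rtimes\langle\sigma_0,\sigma_2\rangle$, declare $\rho_0=\sigma_0$, $\rho_2=\sigma_2$ and $\rho_1=c\rho_0$, and verify the string C-group axioms. A short preliminary case analysis, driven by the coset of $\rho_1$ in $G/P$ together with the fact that $p$ is odd (so that $(\rho_0\rho_1)^p\in P$ forces $\rho_0\rho_1\in P$), shows that $\rho_1$ is forced to lie in $P\rho_0$, which legitimises this construction template.

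With the triple in hand, verification splits into four routine pieces plus one genuine check. The involution condition $\rho_1^2=1$ is exactly $\sigma_0(c)=c^{-1}$, and then $\rho_0\rho_1=c^{-1}$ has order $p$. A direct expansion gives $(\rho_1\rho_2)^2 = c\cdot\sigma_0\sigma_2(c)$, so we must arrange $c\cdot\sigma_0\sigma_2(c)$ to have order exactly $p$, which yields $o(\rho_1\rho_2)=2p$. The generation condition $\langle\rho_0,\rho_1,\rho_2\rangle=G$ reduces to asking that the $\langle\sigma_0,\sigma_2\rangle$-orbit of $c$ generates $P$; since $d(P)=2$ by Theorem~\ref{maintheorem}, the Burnside basis theorem lets us further reduce this to checking that the images of $c$ and $\sigma_2(c)$ in the Frattini quotient $P/\Phi(P)$ are linearly independent over $\mathbb{F}_p$.

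For the explicit construction, the base case $m=3$ is clean: take $P_3$ to be the extraspecial Heisenberg group of order $p^3$ and exponent $p$, with $\sigma_0$ inverting both Heisenberg generators and $\sigma_2$ swapping them; choosing $c$ to be one of the generators lets us verify everything by hand. For $m\geq 4$ I would build $P_m$ inductively as a suitable $\langle\sigma_0,\sigma_2\rangle$-equivariant central extension of $P_{m-1}$ by $\mathbb{Z}_p$, arranged so that $c$ retains order $p$ and so that its $\langle\sigma_0,\sigma_2\rangle$-orbit expands to generate the larger $P_m$. The hardest step I expect will be the intersection property $\langle\rho_0,\rho_1\rangle\cap\langle\rho_1,\rho_2\rangle=\langle\rho_1\rangle$: here $\langle\rho_0,\rho_1\rangle$ is the dihedral group of order $2p$ sitting inside $\langle c\rangle\rtimes\langle\rho_0\rangle$, while $\langle\rho_1,\rho_2\rangle$ is the dihedral group of order $4p$, and one must show via the coset decomposition of $G$ over $P$ and a careful analysis of the $\langle\sigma_0,\sigma_2\rangle$-action on $P$-parts that no nontrivial element of $\langle\rho_1,\rho_2\rangle$ other than $\rho_1$ itself lies in $\langle\rho_0,\rho_1\rangle$.
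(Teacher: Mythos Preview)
Your template is sound and in fact the paper's construction is an instance of it: there $\rho_0=s_1\tau\sigma$ and $\rho_2=\sigma$ generate a Klein-four complement to $P$ (not the ``obvious'' complement $\langle\sigma,\tau\rangle$, but a conjugate one), and $\rho_1=c\rho_0$ with $c=\beta s_1^{-1}\in P$ of order $p$. Your base case $m=3$ with the Heisenberg group and the inversion/swap automorphisms is correct.

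There are, however, two substantive gaps. First, you have misidentified the hard step. The intersection property is in fact a one-liner once the Schl\"afli type is $\{p,2p\}$ and $|G|>4p$: since $\langle\rho_1\rangle$ has prime index $p$ in $\langle\rho_0,\rho_1\rangle$, it is maximal there; so if the intersection strictly contained $\langle\rho_1\rangle$ it would be all of $\langle\rho_0,\rho_1\rangle$, forcing $G=\langle\rho_1,\rho_2\rangle$ of order $4p$, a contradiction. No coset chase is needed.

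Second, and more seriously, your inductive step is not a construction but a hope. Saying ``build $P_m$ as a $\langle\sigma_0,\sigma_2\rangle$-equivariant central extension of $P_{m-1}$ by $\mathbb{Z}_p$, arranged so that $c$ retains order $p$'' leaves everything to be done: you must exhibit a $\langle\sigma_0,\sigma_2\rangle$-invariant $2$-cocycle producing a $2$-generated nonabelian extension in which some lift of $c$ still has order $p$ and the images of $c,\sigma_2(c)$ remain independent modulo the new Frattini subgroup. None of this is automatic, and for some $P_{m-1}$ the desired equivariant extension may simply not exist. The paper sidesteps this entirely by writing down, for every $m\ge 3$ at once, a single explicit $p$-group of maximal class with an abelian maximal subgroup equal to its Hughes subgroup, together with two explicit maps $\sigma,\tau$ on generators. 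The real work in the paper is then Lemma~5.1 and the proof that $\sigma$ and $\tau$ actually extend to automorphisms of $P$ (a nontrivial binomial-coefficient computation verifying the last relation is preserved); once that is done, the string C-group verification is immediate. If you want to salvage your approach you must either carry out the extension argument in detail or, as the paper does, replace the induction by a uniform explicit family.
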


Note that the data collected by Conder~\cite{atles1} suggest that a complete classification in that case will be extremely difficult to get.

Let us point out that string C-groups constructed from soluble groups are extremely rare.
For instance, there are 49,910,526,325 groups of order $\leq 2000$~\cite{Groups2000}. These groups are readily accessible in {\sc Magma}~\cite{BCP97} except for those of order 1024. Even though those of order 1024 are not in {\sc Magma}, we know they are all soluble.
One can check that out of all groups of order at most 2000, there are 49,910,525,301  that are soluble groups and 1024 that are non-soluble.
According to the data collected by Conder~\cite{atles1}, the number of string C-group representations of soluble (resp. non-soluble) groups is 4968 (resp. 878).
Hence the ratio (number of string C-group representations/number of groups) is 0.000009\% for soluble groups and 85\% for non-soluble groups.
This shows that it is really not easy to find soluble groups that have string C-group representations.

The paper is organised as follows.
In Section~\ref{background}, we give the necessary background to understand this paper.
In Section~\ref{sec3}, we prove Theorem~\ref{maintheorem}.
In Section~\ref{sec4}, we prove Theorem~\ref{maintheorem2}.
Finally, in Section~\ref{sec5}, we construct rank three string C-groups with group having a non abelian Sylow $p$-subgroup, hence proving Theorem~\ref{maintheorem3}.

\section{Additional background}\label{background}
In this paper we always assume that the string C-group $(G,S)$ is non-degenerate, meaning that there is no 2 in its Schläfli type for, otherwise, the group $G$ is a direct product of two smaller groups. 
The following proposition is
straightforward.
\begin{prop}\label{intersection}
The pair $(G,\{\r_0, \r_1, \r_2\})$ is a string C-group if and only if $G = \langle \r_0,\r_1,\r_2\rangle$, $\r_0\r_2 = \r_2\r_0$ and $\lg \r_0, \r_1\rg \cap \lg \r_1, \r_2\rg = \lg \r_1\rg $.
\end{prop}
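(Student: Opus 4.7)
The plan is to prove the two directions of the equivalence separately. The forward direction is immediate by specialization: if $(G,\{\r_0,\r_1,\r_2\})$ is a string C-group, then condition (1) of the definition gives $G = \langle \r_0,\r_1,\r_2\rangle$, the string property (2) applied with $i=0$, $j=2$ (so that $|i-j|=2>1$) yields $\r_0\r_2 = \r_2\r_0$, and the intersection property (3) applied with $I = \{0,1\}$ and $J = \{1,2\}$ gives exactly $\langle \r_0,\r_1\rangle \cap \langle \r_1,\r_2\rangle = \langle \r_1\rangle$.

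For the converse, I would assume the three listed conditions and verify the full intersection property for every pair $I,J \subseteq \{0,1,2\}$. Cases with $I \subseteq J$ or $J \subseteq I$ are automatic, and the case $\{I,J\} = \{\{0,1\},\{1,2\}\}$ is the hypothesis. The remaining substantive cases all involve $\{0,2\}$ on one side, namely the ``large'' ones $\langle \r_0,\r_2\rangle \cap \langle \r_0,\r_1\rangle = \langle \r_0\rangle$ and $\langle \r_0,\r_2\rangle \cap \langle \r_1,\r_2\rangle = \langle \r_2\rangle$, the ``diagonal'' one $\langle \r_0,\r_2\rangle \cap \langle \r_1\rangle = \{1\}$, and their small companions such as $\langle \r_0\rangle \cap \langle \r_2\rangle = \{1\}$ and $\langle \r_0\rangle \cap \langle \r_1,\r_2\rangle = \{1\}$.

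The central observation is that because $\r_0$ and $\r_2$ commute and are involutions, $\langle \r_0,\r_2\rangle = \{1,\r_0,\r_2,\r_0\r_2\}$ has order at most $4$, so any element of $\langle \r_0,\r_2\rangle$ outside $\langle \r_0\rangle$ has the form $\r_0^{\varepsilon}\r_2$ with $\varepsilon\in\{0,1\}$. If such an element also lies in $\langle \r_0,\r_1\rangle$, then multiplying by $\r_0^{\varepsilon}$ shows $\r_2 \in \langle \r_0,\r_1\rangle \cap \langle \r_1,\r_2\rangle = \langle \r_1\rangle$, which forces $\r_2 = \r_1$ and collapses the setup to the degenerate subcase. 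Every one of the remaining cases reduces in the same way: isolate $\r_2$ (or symmetrically $\r_0$) on one side and then invoke the standing hypothesis; the small intersections follow as subcases. I expect the only mild obstacle to be keeping track of the finite list of intersections, since there is no genuine difficulty here, consistent with the authors' remark that the proposition is straightforward.
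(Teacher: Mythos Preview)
Your argument is correct. The paper itself gives no proof at all---it simply declares the proposition ``straightforward'' and moves on---so there is nothing to compare against beyond confirming that your outline really does close. It does: the forward direction is immediate, and for the converse your key observation that $\langle \rho_0,\rho_2\rangle=\{1,\rho_0,\rho_2,\rho_0\rho_2\}$ lets every nontrivial intersection be reduced, via one multiplication, to the hypothesis $\langle \rho_0,\rho_1\rangle\cap\langle\rho_1,\rho_2\rangle=\langle\rho_1\rangle$, forcing an equality $\rho_i=\rho_j$ that contradicts the standing assumption that $S=\{\rho_0,\rho_1,\rho_2\}$ consists of three distinct involutions. The one place where you might tighten the write-up is to make that distinctness assumption explicit rather than gesturing at a ``degenerate subcase,'' since the paper's definition of a string C-group takes $S$ to be a set of $n$ involutions and hence builds distinctness in.
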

%\subsection{Group theory}

We use  standard notation for group theory, as in~\cite{GroupBookss,GroupBook} for example.
In this section we now briefly describe some of the knowledge of group theory we need.

Let $G$ be a group.   We define the {\em commutator\/} $[x, y]$ of elements $x$ and $y$ of $G$
by $[x, y]=x^{-1}y^{-1}xy$. The following results are elementary and so we give them without proof.

\begin{prop}\label{commutator}
Let $G$ be a group. Then, for any $x, y, z \in G$,
\begin{itemize}
\item [{\rm(1)}] $[xy, z]=[x, z]^y[y, z]$, $[x, yz]=[x, z][x, y]^z$;
\item [{\rm(2)}] $[x, y^{-1}]^y=[x, y]^{-1}$, $[x^{-1}, y]^x=[x, y]^{-1}$;
\item [{\rm(3)}] $[x^{-1}, y^{-1}]^{xy}=[x, y]$.
\end{itemize}
\end{prop}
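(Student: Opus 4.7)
The plan is to verify each identity by direct expansion from the definition $[x,y]=x^{-1}y^{-1}xy$ together with the conjugation convention $a^b=b^{-1}ab$. Both sides of every displayed equality unfold to an explicit word in $x$, $y$, $z$ and their inverses, and the verification is nothing more than checking that the two words agree after the obvious cancellations.

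For part (1), I would expand the left side of $[xy,z]=[x,z]^y[y,z]$ as $y^{-1}x^{-1}z^{-1}xyz$, and separately unfold the right side as $y^{-1}(x^{-1}z^{-1}xz)y\cdot(y^{-1}z^{-1}yz)$; the $yy^{-1}$ and $zz^{-1}$ cancellations in the middle produce exactly the same word $y^{-1}x^{-1}z^{-1}xyz$. The companion identity $[x,yz]=[x,z][x,y]^z$ is handled by the symmetric calculation.

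Once (1) is in hand, part (2) drops out as a one-line corollary: evaluating the trivial commutator gives $1=[xx^{-1},y]=[x,y]^{x^{-1}}[x^{-1},y]$, and solving for $[x^{-1},y]$ and then conjugating by $x$ yields $[x^{-1},y]^x=[x,y]^{-1}$. The identity $[x,y^{-1}]^y=[x,y]^{-1}$ is obtained in the same way from $1=[x,yy^{-1}]$ using the second half of (1). Alternatively, both parts of (2) can be checked by raw expansion, e.g.\ $[x,y^{-1}]^y=y^{-1}(x^{-1}yxy^{-1})y=y^{-1}x^{-1}yx=[x,y]^{-1}$.

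For part (3), the quickest route is direct: $[x^{-1},y^{-1}]=xyx^{-1}y^{-1}$, so $(xy)^{-1}(xyx^{-1}y^{-1})(xy)$ telescopes via a single $y^{-1}y$ cancellation to $x^{-1}y^{-1}xy=[x,y]$. One could instead derive (3) by applying the two halves of (2) in succession (first conjugating $[x^{-1},y^{-1}]$ by $x$, then by $y$), but the two-line direct computation is shorter. The content is elementary and presents no genuine obstacle; the only point requiring care is tracking the direction of each conjugation, which is fixed by the convention $a^b=b^{-1}ab$, and writing out full words on both sides rather than manipulating commutator symbols abstractly avoids any confusion.
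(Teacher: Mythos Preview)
Your proof is correct; the direct expansion from the definition $[x,y]=x^{-1}y^{-1}xy$ and the convention $a^b=b^{-1}ab$ verifies each identity exactly as you describe. The paper itself gives no proof of this proposition, simply declaring the results ``elementary,'' so your write-up supplies precisely the routine computation the authors omit.
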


Let $G$ be a group.
The {\em Frattini subgroup}, denoted by $\Phi(G)$, is the intersection of all maximal subgroups of $G$. Obviously, $\Phi(G)$ is a characteristic subgroup of $G$. The following theorem is the well-known Burnside Basis Theorem.

\begin{theorem}{\rm ~\cite[Theorem 1.12]{GroupBookss}}\label{burnside}
Let $G$ be a $p$-group and $|G: \Phi(G)| = p^d$.
\begin{itemize}
\item [(1)] $G/\Phi(G) \cong \mz_p^d$. Moreover, if $N \lhd G$ and $G/N$ is elementary abelian, then $\Phi(G) \leq N$.

\item [(2)] Every minimal generating set of $G$ contains exactly $d$ elements.
\end{itemize}
\end{theorem}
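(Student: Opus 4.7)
The plan is to exploit two structural features of $p$-groups: first, every maximal subgroup of a $p$-group is normal of index $p$ (a consequence of the normaliser condition for nilpotent groups), and second, the Frattini subgroup consists of ``non-generators''. Neither assertion is deep, but the whole statement flows from their combination.

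For part (1), I would first record that each maximal subgroup $M$ of $G$ satisfies $M\lhd G$ and $|G:M|=p$, because in the $p$-group $G$ the normaliser of any proper subgroup is strictly larger, so $M=\N_G(M)$ forces $M\lhd G$, and then $G/M$ is a $p$-group with no proper nontrivial subgroups, hence of order $p$. Letting $M_1,\ldots,M_r$ be the maximal subgroups, the diagonal map gives an embedding $G/\Phi(G)\hookrightarrow \prod_{i=1}^r G/M_i\cong \mz_p^r$, so $G/\Phi(G)$ is an elementary abelian $p$-group. Combined with $|G:\Phi(G)|=p^d$, this yields $G/\Phi(G)\cong \mz_p^d$. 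For the ``moreover'' clause, suppose $N\lhd G$ with $G/N$ elementary abelian. Then every maximal subgroup of $G/N$ corresponds to a maximal subgroup of $G$ containing $N$, and in an elementary abelian group the intersection of all maximal subgroups is trivial; pulling back, $N=\bigcap_{M\,\text{max},\,M\supseteq N} M$. Since $\Phi(G)$ lies in every maximal subgroup of $G$, a fortiori in every maximal subgroup containing $N$, we conclude $\Phi(G)\leq N$.

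For part (2), the key is the non-generator property: if $\lg X,\Phi(G)\rg=G$ then $\lg X\rg=G$. Indeed, otherwise $\lg X\rg$ lies in some maximal subgroup $M$, but then $\Phi(G)\leq M$ too, giving $\lg X,\Phi(G)\rg\leq M<G$, a contradiction. Consequently a subset of $G$ generates $G$ if and only if its image modulo $\Phi(G)$ generates $G/\Phi(G)$. Since $G/\Phi(G)\cong \mz_p^d$ is a $d$-dimensional vector space over $\mz_p$, every minimal generating set of $G/\Phi(G)$ is a basis and therefore has exactly $d$ elements. Lifting this back, any minimal generating set of $G$ contains exactly $d$ elements: it cannot have fewer, because then $G/\Phi(G)$ would be generated by fewer than $d$ elements, and it cannot have more, because one could discard a superfluous element whose image modulo $\Phi(G)$ is dependent on the others.

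The only step that is not a one-line observation is the non-generator property of $\Phi(G)$, which I expect to be the main (minor) obstacle; the normality of maximal subgroups in a $p$-group and the vector space interpretation of $G/\Phi(G)$ are then routine. Once these are in place, both parts follow without any case analysis.
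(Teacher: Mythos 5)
Your argument is correct and is the standard textbook proof of the Burnside Basis Theorem; the paper itself offers no proof, citing the result directly from Berkovich \cite[Theorem 1.12]{GroupBookss}, so there is nothing in the paper to compare against. All the ingredients you use (normality and index $p$ of maximal subgroups of a finite $p$-group, the diagonal embedding of $G/\Phi(G)$ into $\prod_i G/M_i$, the non-generator property of $\Phi(G)$, and the vector-space interpretation of $G/\Phi(G)$) are deployed correctly, including the counting argument in part (2).
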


The unique cardinality of all  minimal generating sets of a $p$-group $P$ is called the {\em rank} of $P$,  and denoted by $d(P)$.

\begin{lem}\label{Abeliangroupwithinvolutationauto}
Let $G$ be an abelian group of odd order and $\alpha \in \Aut(G)$. If $o(\alpha) =2$, then
$G=G_1 \times G_2$, where $G_{1}=\{g \in G\; | \; g^{\alpha}=g\}$ and $G_2=\{g \in G\; | \; g^{\alpha}=g^{-1}\}$.
\end{lem}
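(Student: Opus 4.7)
The plan is to exploit the fact that, since $|G|$ is odd, the squaring map $g \mapsto g^2$ is a bijection on $G$, so every element has a unique ``square root''. This lets us decompose any element of $G$ into an $\alpha$-fixed part and an $\alpha$-inverted part in a canonical way.

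First, I would check that $G_1$ and $G_2$ are subgroups of $G$. Both are clearly closed under inversion and under taking the identity; closure under multiplication uses that $G$ is abelian, since for $x, y \in G_2$ we have $(xy)^\alpha = x^\alpha y^\alpha = x^{-1}y^{-1} = (xy)^{-1}$, and similarly for $G_1$. Next I would verify that $G_1 \cap G_2 = \{1\}$: an element $g$ in the intersection satisfies $g = g^\alpha = g^{-1}$, so $g^2 = 1$, and since $|G|$ is odd this forces $g = 1$.

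The main step is to show $G = G_1 G_2$. Given $g \in G$, note that $g \cdot g^\alpha$ is fixed by $\alpha$ (using that $\alpha^2 = 1$) and $g \cdot (g^\alpha)^{-1}$ is inverted by $\alpha$. Because $|G|$ is odd, the map $x \mapsto x^2$ is a bijection on $G$, so there exist unique elements $g_1, g_2 \in G$ with $g_1^2 = g \cdot g^\alpha$ and $g_2^2 = g \cdot (g^\alpha)^{-1}$. Since both the set of $\alpha$-fixed elements and the set of $\alpha$-inverted elements are subgroups (hence closed under taking square roots in an odd-order abelian group by the same bijectivity argument, applied within the subgroup), we get $g_1 \in G_1$ and $g_2 \in G_2$. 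Now $(g_1 g_2)^2 = g_1^2 g_2^2 = g^2$ (using commutativity), and uniqueness of square roots gives $g_1 g_2 = g$.

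Combining the two parts, $G = G_1 G_2$ with $G_1 \cap G_2 = 1$ and both factors normal (since $G$ is abelian), so $G = G_1 \times G_2$. The only subtle point is the use of unique square roots in $G_1$ and $G_2$ separately, but this is immediate since any subgroup of $G$ also has odd order. No serious obstacle is expected; the proof is essentially a one-line averaging argument once the squaring bijection is in hand.
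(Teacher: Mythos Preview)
Your proof is correct and follows essentially the same approach as the paper: both decompose $g$ as the product of a square root of $g\,g^{\alpha}\in G_1$ and a square root of $g\,(g^{\alpha})^{-1}\in G_2$. The only cosmetic difference is that the paper writes these square roots explicitly as $g^{t}(g^{t})^{\alpha}$ and $(g^{-t})^{\alpha}g^{t}$, where $2t\equiv 1\pmod{|G|}$, whereas you appeal abstractly to the bijectivity of squaring in an odd-order group.
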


\begin{proof}
Since $(|G|,2)=1$, we have $s|G|+2t=1$ for some integers $s$ and $t$.
For $g \in G$, we have 
$$g=g^1=g^{2t}=g^t(g^{t})^{\alpha}(g^{-t})^{\alpha}g^t.$$

Since $G$ is abelian and $\alpha^2=1$, we have $(g^t(g^{t})^{\alpha})^{\alpha}=g^t(g^{t})^{\alpha}$, $((g^{-t})^{\alpha}g^t)^{\alpha}=((g^{-t})^{\alpha}g^t)^{-1}$.
It means that $g^t(g^{t})^{\alpha} \in G_1$ and $(g^{-t})^{\alpha}g^t \in G_2$.
Hence $G=G_1G_2$.

On the other hand, it is easy to see that $G_1$ and $G_2$ are both normal subgroups of $G$ and that $G_1 \cap G_2=1$. 
Hence $G=G_1 \times G_2$.
\end{proof}

We will also need Sylow's third theorem so we state it here for clarity.
\begin{theorem}[3rd Sylow's theorem]\label{sylow}
Let $G$ be a group of order $p^nm$ with $(p,m)=1$. Let $n_p(G)$ be the number of Sylow $p$-subgroup of $G$. Then
\begin{enumerate}
    \item $n_p(G)\; | \;m$;
    \item $n_p(G) \equiv 1\; \mod p$;
    \item $n_p(G) = [G:N_G(P)]$ where $P$ is a Sylow $p$-subgroup of $G$.
\end{enumerate}\end{theorem}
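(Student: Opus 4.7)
The plan is to deduce all three statements from a conjugation action argument, using the (prior) fact, from Sylow's second theorem, that any two Sylow $p$-subgroups of $G$ are conjugate. Let $\Omega=\Syl_p(G)$ denote the set of all Sylow $p$-subgroups of $G$, so $|\Omega|=n_p(G)$, and consider the action of $G$ on $\Omega$ by conjugation.

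First I would handle part (3). Since all Sylow $p$-subgroups are conjugate in $G$, the $G$-action on $\Omega$ is transitive, so $\Omega$ is a single orbit of length $n_p(G)$. The stabiliser of a point $P\in\Omega$ under conjugation is $\{g\in G \mid gPg^{-1}=P\}=N_G(P)$, so the orbit-stabiliser theorem gives $n_p(G)=[G:N_G(P)]$.

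Part (1) follows immediately from (3): since $P\le N_G(P)\le G$, Lagrange's theorem gives $|P|=p^n$ divides $|N_G(P)|$, and therefore $n_p(G)=|G|/|N_G(P)|=p^nm/|N_G(P)|$ is a divisor of $m$.

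The main (and less routine) step is part (2). I would restrict the conjugation action to a fixed Sylow $p$-subgroup $P$, so that $P$ acts on $\Omega$. Since $|\Omega|$ equals the sum of the sizes of the $P$-orbits, and every non-trivial $P$-orbit has length divisible by $p$ (its length divides $|P|=p^n$), I get $n_p(G)\equiv |\Fix_P(\Omega)|\pmod p$, where $\Fix_P(\Omega)$ is the set of $Q\in\Omega$ fixed by $P$ under conjugation. The key claim is that $\Fix_P(\Omega)=\{P\}$. Indeed, if $Q\in\Omega$ satisfies $xQx^{-1}=Q$ for all $x\in P$, then $P\le N_G(Q)$, so $P$ and $Q$ are both Sylow $p$-subgroups of $N_G(Q)$; but $Q\unlhd N_G(Q)$, so $Q$ is the unique Sylow $p$-subgroup of $N_G(Q)$ (applying Sylow's second theorem inside $N_G(Q)$), forcing $P=Q$. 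Hence $|\Fix_P(\Omega)|=1$ and $n_p(G)\equiv 1\pmod p$.

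The only delicate point is the fixed-point argument for (2); (1) and (3) are essentially bookkeeping once (3) is in hand. I would rely throughout on Sylow's second theorem (any two Sylow $p$-subgroups are conjugate, and hence a normal Sylow $p$-subgroup is unique), which I am assuming has been established prior to this statement; otherwise it would need to be proved first by the standard action on cosets or on the set of $p^n$-element subsets of $G$.
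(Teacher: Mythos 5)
Your proof is correct: it is the standard orbit--counting proof of Sylow's third theorem, with the fixed-point argument for part (2) handled properly (the key observation that a $P$-fixed $Q\in\Omega$ forces $P\le N_G(Q)$ and hence $P=Q$ by uniqueness of the normal Sylow subgroup of $N_G(Q)$). Note that the paper itself gives no proof of this statement --- it is quoted as classical background (``We will also need Sylow's third theorem so we state it here for clarity'') --- so there is nothing to compare against; your argument would serve as a complete and correct proof, modulo the conjugacy statement of Sylow's second theorem which you correctly flag as a prerequisite.
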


Finally, we state a proposition about binomial coefficients that will be used in the proof of one of our theorems.

\begin{prop}\cite[Chapter 1]{MA}\label{enumeration}
Let $n, m, k \in \mathbb{N}$ and let $\tbinom{n}{m}$ denote the binomial coefficient indexed by $n$ and $m$. Then
\begin{itemize}
\item [\rm(1)] $\tbinom{n}{m} =\tbinom{n}{n-m}$;
\item [\rm(2)] $\tbinom{n+2}{m+1} =\tbinom{n+1}{m+1}+\tbinom{n+1}{m}$;
\item [\rm(3)] $\tbinom{n}{m}\tbinom{m}{k}=\tbinom{n}{k}\tbinom{n-k}{m-k}$;
\item [\rm(4)] $\sum\limits_{k=0}^m(-1)^{k}\tbinom{n}{k}=(-1)^m\tbinom{n-1}{m}$;
\end{itemize}
Furthermore, if $m <0$, then $\tbinom{n}{m}=0$.
\end{prop}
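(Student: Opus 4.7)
The plan is to verify each of the four identities directly from the factorial definition $\binom{n}{m}=\frac{n!}{m!(n-m)!}$, with the convention that $\binom{n}{m}=0$ for $m<0$ or $m>n$; only (4) needs a short induction. Throughout one may assume $0\le m\le n$ and $0\le k\le m$, as the remaining cases fall under the boundary convention.

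Identity (1) is immediate: swapping $m$ and $n-m$ in the denominator gives $\binom{n}{n-m}=\frac{n!}{(n-m)!\,m!}=\binom{n}{m}$. For (3), the so-called ``subcommittee'' identity, I would write a single chain of equalities
\[
\binom{n}{m}\binom{m}{k}=\frac{n!}{m!(n-m)!}\cdot\frac{m!}{k!(m-k)!}=\frac{n!}{k!(m-k)!(n-m)!}=\frac{n!}{k!(n-k)!}\cdot\frac{(n-k)!}{(m-k)!(n-m)!}=\binom{n}{k}\binom{n-k}{m-k}.
\]
Identity (2) is Pascal's rule with shifted parameters: from $\binom{N}{K}=\binom{N-1}{K}+\binom{N-1}{K-1}$, proved by putting the right-hand side over the common denominator $K!(N-K)!$ and using $(N-K)+K=N$, one sets $N=n+2$ and $K=m+1$ to obtain (2).

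For (4) I would induct on $m$. The base case $m=0$ reduces to $1=\binom{n-1}{0}$. For the step from $m$ to $m+1$, separating off the last term and invoking the inductive hypothesis yields
\[
\sum_{k=0}^{m+1}(-1)^k\binom{n}{k}=(-1)^m\binom{n-1}{m}+(-1)^{m+1}\binom{n}{m+1},
\]
and applying Pascal's rule $\binom{n}{m+1}=\binom{n-1}{m+1}+\binom{n-1}{m}$ collapses the right-hand side to $(-1)^{m+1}\binom{n-1}{m+1}$, as desired. There is no real obstacle: the statement bundles four elementary identities, each of which reduces to one or two lines of factorial arithmetic, which is presumably why the authors simply cite \cite[Chapter 1]{MA} rather than supplying a proof.
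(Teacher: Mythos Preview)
Your proof is correct in every detail; each of the four identities is handled by the standard one-line factorial manipulation or the obvious induction, and the boundary convention is stated up front. The paper itself gives no proof of this proposition at all---it merely quotes the result with a citation to Aigner's textbook---so there is no approach to compare against; your argument is a correct and self-contained supplement to what the authors deliberately left as a reference.
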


\section{Proof of Theorem~\ref{maintheorem}}\label{sec3}

Let $m \geq 2$ and $p$ be an odd prime. Let $(G,\{\r_0, \r_1, \r_2\})$ be a  string C-group of rank three and order $4p^m$ with Schläfli type $\{k_1, k_2\}$.

%\begin{theorem}\label{main theorem}
%Let $(G, \{\r_0, \r_1, \r_2\})$ be a string C-group of order $4p^m$ with type $\{k_1, %k_2\}$ and $P \in Syl_{p}(G)$. Then
%\begin{itemize}
%\item [{\rm (1)}] $P \unlhd G$ and $G=P \rtimes \lg \r_0, \r_2\rg$;
%\item [{\rm (2)}] Up to duality, $p \ \mid \ k_1$ and $2p  \mid \ k_2$;
%\item [{\rm (3)}] $d(P)=2$.
%\end{itemize}
%\end{theorem}

Let us first prove (1) of Theorem~\ref{maintheorem}.
Let $n_p(G)$ be the number of Sylow $p$-subgroup subgroups of $G$ and let $P$ be a Sylow $p$-subgroup of $G$.
By Theorem~\ref{sylow} (1),
$n_p(G) \mid 4$ and $n_p(G) \equiv  1\ \mod p$. It follows that $n_p(G)=1$ or $4$ in our case as $|G| = 4p^m$.
If $n_p(G)=1$, then $P \unlhd G$. By Theorem~\ref{sylow}(2), if $n_p(G) =4$, then $p=3$ and hence $|G|=4\cdot3^m$.
Set $\Omega=\{P^g\; |\; g \in G\}$. Recall that $|\Omega|=4$.
Consider the action $\phi$ of $G$ on $\Omega$ by conjugation and let $K=\Ker \phi$.
As $|\Omega| = 4$, we have that $G/K \leq S_4$. In fact, since $G$ is transitive on $\Omega$ and $K < P$, we have $|K|=3^{m-1}$.
Then $G/K \cong A_4$.
However, the subgroups generated by all involutions in $A_4$ is $\mz_2 \times \mz_2$.
Since $\r_0^2=\r_1^2=\r_2^2=1$, we have $|G/K|=|\lg \r_0K, \r_1K, \r_2K\rg| \leq 4$.
Note that $|K| = p^{m-1}$. Thus $|G| = 4p^{m-1}$, a contradiction as $|G|=4p^m$. Thus, $n_p(G)=1$, that is, $P \unlhd G$.

We thus have $|\lg \r_0, \r_2\rg|=4$ and $|P|=p^m$. Also $|P \cap \lg \r_0, \r_2\rg|=1$.
It follows that $|P \lg \r_0, \r_2 \rg|=4p^m=|G|$, that is, $G=P\lg \r_0, \r_2\rg$.
In face, since $P \unlhd G$, we have $G=P \rtimes \lg \r_0, \r_2\rg$, finishing the proof of (1).

Let us now prove (2) of Theorem~\ref{maintheorem}.
If, say, $k_1$ is not divisible by $p$, then it is equal to 2 and $G$ is degenerate.
The same holds for $k_2$.
We then only need to show that $k_1$ and $k_2$ cannot be both not divisible by 2.
Note that $G=\lg \r_0, \r_0\r_1, \r_1\r_2\rg$. Consider the quotient group $\olg=G/P=\lg \r_0P, \r_1P, \r_2P \rg$.
If $k_1$ and $k_2$ are both not divisible by 2, then $\r_0\r_1$ and $\r_1\r_2$ both have order a power of $p$. Thus $\r_0\r_1 \in P$ and $\r_1\r_2 \in P$. It follows that $G/P=\lg \r_0P\rg$, and hence $|G/P| = 2$, which is impossible because $|G/P|=4$. Therefore, one at least of $k_1$ or $k_2$ must be divisible by $2p$, finishing the proof of (2).

(3) Assume $\r_1=u\r_0^i\r_2^j$, where $u \in P$ and $i , j \in \{0, 1\}$. Then 
$$1=\r_1^2=u\r_0^i\r_2^ju\r_0^i\r_2^j=uu^{\r_0^i\r_2^j}$$ 
and hence $u^{\r_0^i\r_2^j}=u^{-1}$. 
Since $u \in P$, we have $\r_0^i\r_2^j \ne 1$. Then there exists $\r_0^{i'}\r_2^{j'}$ such that
$\lg \r_0, \r_2\rg=\lg \r_0^i\r_2^j,  \r_0^{i'}\r_2^{j'}\rg$.

Now, let $H= \lg u, u^{ \r_0^{i'}\r_2^{j'}}\rg$. Obviously, $\r_0^{i'}\r_2^{j'} \in N_{G}(H)$. Moreover, 
$$u^{\r_0^i\r_2^j}=u^{-1} \in H, (u^{\r_0^{i'}\r_2^{j'}})^{\r_0^i\r_2^j}=(u^{\r_0^{i}\r_2^{j}})^{\r_0^{i'}\r_2^{j'}}=(u^{-1})^{\r_0^{i'}\r_2^{j'}}=(u^{ \r_0^{i'}\r_2^{j'}})^{-1} \in H.$$ 
Then $\r_0^{i}\r_2^{j} \in N_{G}(H)$. It follows that $H\lg  \r_0^i\r_2^j,  \r_0^{i'}\r_2^{j'}\rg=H\lg \r_0, \r_2\rg \leq G$.

On the other hand, it is easy to see that $\r_1\in H\lg  \r_0, \r_2\rg$. Thus $G=H\lg  \r_0, \r_2\rg$, meaning that $P=H$ and $d(P)=d(H) \leq 2$.

Now, if $d(P)=1$, then $P$ is cyclic. Since $k_1, k_2 \geq 3$, we have $(\r_0\r_1)^2, (\r_1\r_2)^2 \in P$.
It follows that $\lg (\r_0\r_1)^2\rg \cap \lg (\r_1\r_2)^2\rg \ne \langle 1 \rangle$, which is impossible
 because $\lg \r_0, \r_1\rg \cap \lg \r_1, \r_2 \rg=\lg \r_1\rg$.
Thus, $d(P)=2$. This finishes the proof of (3) and thus the proof of Theorem~\ref{maintheorem}. \hfill\qed

%\section{Classificy of string C-groups of order $4p^m$ for $P$ is abelian}
\section{Proof of Theorem~\ref{maintheorem2}}\label{sec4}
Let $m \geq 2$ and let $p$ be an odd prime.
Let $(G, \{\r_0, \r_1, \r_2\})$ be a string C-group of rank three and order $4p^m$, with Schläfli type $\{k_1, k_2\}$. Let $P$ be a Sylow $p$-subgroup of $G$. By Theorem~\ref{maintheorem}(1), we know that $P$ is a normal subgroup of $G$.
We now prove that if $P$ is abelian, then $(G, \{\r_0, \r_1, \r_2\})$ must be tight, that is $k_1k_2=2p^m$.

By Lemma~\ref{Abeliangroupwithinvolutationauto}, we have $P=A  \times B \times C \times D$, where

$$A=\{g \in P\; |\;  g^{\r_0}=g, g^{\r_2}=g\}, B=\{g \in P\; |\; g^{\r_0}=g, g^{\r_2}=g^{-1}\},$$
$$C=\{g \in P\; |\; g^{\r_0}=g^{-1}, g^{\r_2}=g\}, D=\{g \in P\; |\; g^{\r_0}=g^{-1}, g^{\r_2}=g^{-1}\}.$$

Moreover at least two of the above subgroups are the identity subgroup, and if two of them are not the identity subgroups, they must be cyclic by Theorem~\ref{maintheorem}(3).

If $P=A$, then $\r_0 \in Z(G)$. Consider the quotient group $\olg=G/\lg \r_0\rg$.
Then $\olg=\lg \overline{\r_1}, \overline{\r_2}\rg$ and has order $2p^m$.
Since $o(\overline{\r_1})=o(\overline{\r_2})=2$, we have $\olg \cong D_{2p^m}$.
It follows that $o(\overline{\r_1}\overline{\r_2})=p^m$ and $d(P)=1$,
which is a contradiction with Theorem~\ref{maintheorem} (3).

If $P=B$, $C$ or $D$, then $\r_0, \r_2$ or $\r_0\r_2 \in Z(G)$. Using arguments similar as in the case $P=A$, we get $d(P)=1$, a contradiction. In fact, if $P \cap A \ne \{1\}$, then $\lg \r_0, \r_2\rg \cap Z(G) \ne \{1\}$. It follows that $d(P)=1$, again contradicting Theorem~\ref{maintheorem}(1).

Since we proved that $P$ cannot be one of $A$, $B$, $C$ or $D$ and since $d(P)=2$, we have $P=K\times L$ with $K,L\in \{A,B,C,D\}$ and $K\neq L$.

If $K=A$, then $\r_1 = xy\r_0^i\r_2^j$ for some $x\in A$ and $y\in L$.
Since $\r_1^2 = 1$, we have that 
$$xy(xy)^{\r_0^i\r_2^j} = x^2yy^{\r_0^i\r_2^j} = 1$$
which is impossible as $x^2\neq 1$.
So $P$ must be one of $B\times C$, $B\times D$ or $C\times D$.

If $P=B \times C$, then
$$G=(\lg x\rg \times \lg y \rg) \rtimes \lg \r_0, \r_2)\rg$$
$$=\lg x, y, \r_0, \r_2 \; | \; x^{p^{l_1}}, y^{p^{l_2}}, \r_0^2, \r_2^2, [x,y], [\r_0, \r_2], x^{\r_0}=x, x^{\r_2}=x^{-1}, y^{\r_0}=y^{-1}, y^{\r_2}=y\rg.$$

Let $\r_1=x^{t_1}y^{t_2}\r_0^{i}\r_2^{j}$.
Then $G=\lg \r_0, \r_1, \r_2\rg=\lg \r_0, x^{t_1}y^{t_2}, \r_2\rg
=\lg x^{t_1}, y^{t_2}, \r_0, \r_2\rg =\lg x^{t_1}, y^{t_2}\rg \rtimes \lg \r_0, \r_2\rg$.
If $p \mid t_1$, then $x^{t_1} \in \Phi(P)$ and $P=\lg y^{t_2}\rg$ which is impossible because $d(P)=2$. Thus, $(t_1,p)=1$. By the same reason, we have $(t_2, p)=1$.
Without loss of generality, one can assume $\r_1=xy\r_0^{i}\r_2^{j}$. Since $\r_1^2=1$, 
it is easy to check that if one of $i$ or $j$ equals 0 we get either that $x^2=1$ or $y^2=1$, a contradiction. Hence we have $i=j=1$ and $\r_1=xy\r_0\r_2$. 

On the other hand, since $(\r_1\r_0)^2=(xy\r_2)^2=y^2$, we have $o(\r_0\r_1)=2o(y)$. 
Since $(\r_1\r_2)^2=(xy\r_0)^2=x^2$, we have $o(\r_1\r_2)=2o(x)$. It follows that
$$|\lg \r_0, \r_1\rg \lg \r_1, \r_2\rg|=
\frac{|\lg \r_0, \r_1\rg| \cdot |\lg \r_1, \r_2\rg |}{|\lg \r_0, \r_1\rg \cap \lg \r_1, \r_2\rg|}
=\frac{2\cdot 2o(y)\cdot 2 \cdot 2o(x)}{2}=8o(x)o(y),$$
which is impossible because $|G|=4o(x)o(y)$.

If $P=B \times D$, then
$$G=(\lg x\rg \times \lg y \rg) \rtimes \lg \r_0, \r_2)\rg$$
$$=\lg x, y, \r_0, \r_2 \; | \; x^{p^{l_1}}, y^{p^{l_2}}, \r_0^2, \r_2^2, [x,y], [\r_0, \r_2], x^{\r_0}=x, x^{\r_2}=x^{-1}, y^{\r_0}=y^{-1}, y^{\r_2}=y^{-1}\rg.$$

Without loss of generality, one can assume $\r_1=xy\r_0^{i}\r_2^{j}$. Since $\r_1^2=1$, 
we have $i=0$ and $j=1$. Then $\r_1=xy\r_2$.

Since $(\r_1\r_0)^2=(xy\r_2\r_0)^2=y^2$, we have $o(\r_0\r_1)=2o(y)$. 
Since $\r_1\r_2=xy$, we have $o(\r_1\r_2)=max\{o(x), o(y)\}$. 
However, if $o(y) >o(x)$, then
 $$|\lg \r_0, \r_1\rg \lg \r_1, \r_2\rg|=
\frac{|\lg \r_0, \r_1\rg| \cdot |\lg \r_1, \r_2\rg |}{|\lg \r_0, \r_1\rg \cap \lg \r_1, \r_2\rg|}
=\frac{2\cdot 2o(y)\cdot 2 \cdot o(y)}{2}=4o(y)^2> 4o(x)o(y)=|G|.$$
Then $o(y)  \leq o(x)$. Thus, $2\cdot o(\r_0\r_1) \cdot o(\r_1\r_2)=2\cdot 2o(y) \cdot o(x)=|G|$.
It follows that $G$ is tight.

If $P=C \times D$, we get a case similar to the previous one, with $\r_1=xy\r_0$ and we arrive to the same conclusion. This is the dual case to the previous one.

Here we have proved that if $P$ is abelian then $(G, \{\r_0, \r_1, \r_2\})$ is tight.

From~\cite[Theorem 1.1]{GD2016}, up to duality, there is a unique string C-group of order $4p^m$ with type $\{p^k, 2p^l\}$ with $m=l+k$ and $l\leq k$.
From the proof above, we have that $l\leq k$. Moreover, since we constructed the unique example, $P$ has to be abelian. Therefore, if $G$ is tight, $P$ has to be abelian.
\hfill\qed

\section{Construction of string C-groups of rank three with a group  of order $4p^m$ and a Sylow $p$-subgroup that is not abelian}\label{sec5}

The main purpose of this section is to prove Theorem~\ref{maintheorem3}.
In order to do so, we will give a constructive proof. To do that,  we need to know some more information about groups of order $p^m$. 
Let $P$ be a $p$-group and let $H_p(P)$ be the
subgroup of $P$ generated by all the elements of order different from $p$. 
The subgroup $H_p(P)$ is called the {\em Hughes
subgroup} of $P$ (see~\cite{hughes,hughes1,hughes2}).
A group $P$ of order $p^m$ and nilpotency class $m-1$ is said to be of maximal class if $m \geq 3$. 
The basic
material about these groups can be found in Blackburn~\cite[pages 83--84]{Blackburn} or Huppert~\cite[Chapter 3]{Huppert}.
If $P$ has an abelian maximal subgroup $A$, and $A=H_{p}(P)$, then, by \cite[Chapter 8, Example 8.3.3]{XQ}), %$\mathcal{G}$ is the following group% (also see~\cite[Chapter 8, Example 8.3.3]{XQ}).

$$P=\lg s_1, s_2, \cdots, s_r, s_{r+1}, \cdots, s_{p-1}, \b \rg\ \mbox{with}\ \mbox{the}\ \mbox{following}\ \mbox{relations}$$
$$ s_1^{p^e}, s_2^{p^e}, \cdots, s_r^{p^e}, s_{r+1}^{p^{e-1}}, \cdots s_{p-1}^{p^{e-1}}, \b^p,$$
$$[s_i, s_j],  \ {\rm for} \ 1 \leq i, j \leq p-1,\ \mbox{and}\ s_{k+1}=[s_k, \b],  \ {\rm for} \ 1 \leq k \leq p-2,$$
$$[s_{p-1}, \b]=s_1^{-\tbinom{p}{1}}s_2^{-\tbinom{p}{2}}\cdots s_{p-2}^{-\tbinom{p}{p-2}}s_{p-1}^{-\tbinom{p}{p-1}},$$
where 
%$\tbinom{l_1}{l_2}$ denotes  the binomial coefficient indexed by $l_1$ and $l_2$, and
$$1 \leq e,1 \leq r \leq p-1,\ m=er+(e-1)(p-r-1)+1 \geq 3.$$
Let $A := \lg s_1, s_2, \cdots, s_{p-1}\rg$. We have that $A$ is abelian and $A \unlhd P$.

Moreover, any $x \in P\backslash  A$ has order $p$.

With the relations given above, it is clear that $P=\lg s_1, \b\rg$. Define
\begin{center}
\begin{tabular}{lll}
$\sigma:$ \qquad & $s_1 \mapsto s_1$ \quad &  and \quad $\beta \mapsto \beta^{-1}$,  \\
$\tau:$ \qquad & $s_1 \mapsto s_1^{-1}$ \quad & and \quad $\beta \mapsto \beta$.  \\
\end{tabular}
\end{center}

In this section we will prove the following theorem.

\begin{theorem}\label{maintheorem4}
Let $G=P\rtimes \lg \sigma, \tau\rg$. 
Let $\r_0:=s_1\tau\sigma$, $\r_1:=\b\tau\sigma$ and $\r_2:=\sigma$ with $s_1, \sigma$ and $\tau$ defined as above.
Then $(G,\{\r_0,\r_1,\r_2\})$ is a string C-group of order $4p^m$ with Schläfli type $\{p, 2p\}$, where $m \geq 3$ and $p$ is an odd prime.
\end{theorem}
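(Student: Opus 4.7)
The plan is to verify in turn the hypotheses of Proposition~\ref{intersection}, together with the asserted Schl\"afli type. Using the presentation of $P$ recalled at the start of the section, I would first check that $\sigma$ and $\tau$ extend to well-defined commuting involutary automorphisms of $P$; this amounts to verifying that each defining relation is preserved, which follows from routine applications of Proposition~\ref{commutator} together with the fact that $A\unlhd P$ is abelian. Consequently $\lg\sigma,\tau\rg\cong \mz_2\times\mz_2$, so $|G|=4p^m$. The involutory nature of the $\r_i$ then follows from one-line computations using $(\tau\sigma)^{-1}=\tau\sigma$ and $(\tau\sigma)^2=1$: for instance $\r_0^2 = s_1\cdot s_1^{\tau\sigma} = s_1 s_1^{-1} = 1$ and $\r_1^2 = \b\cdot\b^{\tau\sigma}=\b\b^{-1}=1$. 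Since $\sigma$ fixes $s_1$ and commutes with $\tau$, one gets $\r_0\r_2 = s_1\tau = \r_2\r_0$, which is the string property.

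Next, to pin down the Schl\"afli type, I would compute $\r_0\r_1 = s_1\b^{-1}$ and $\r_1\r_2 = \b\tau$. Because $s_1\in A$ while $\b\notin A$, the element $s_1\b^{-1}$ lies in $P\setminus A$, and by the structural fact recalled just before the theorem (every element of $P\setminus A$ has order $p$), $o(\r_0\r_1)=p$. Since $\tau$ centralizes $\b$, $(\r_1\r_2)^2=\b^2$ has order $p$, so $o(\r_1\r_2)=2p$. For generation, I would extract $\sigma = \r_2$, then $\b$ from $(\r_1\r_2)^2=\b^2$ using that $p$ is odd, then $\tau = \b^{-1}(\b\tau)$, and finally $s_1 = (\r_0\r_2)\tau^{-1}$; since $P = \lg s_1, \b\rg$ and $G = P\cdot\lg\sigma,\tau\rg$, this yields $\lg\r_0,\r_1,\r_2\rg = G$.

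The main obstacle is the intersection property. I would argue as follows: since $\lg\r_0,\r_1\rg$ is dihedral of order $2p$, its only subgroups containing $\r_1$ are $\lg\r_1\rg$ and the whole group, so it suffices to prove $\r_0\notin\lg\r_1,\r_2\rg$. Using $[\b,\tau]=1$, one can list the $4p$ elements of the dihedral group $\lg\r_1,\r_2\rg$ as $\b^i\tau^i$ and $\b^{i+1}\tau^{i+1}\sigma$ for $0\le i\le 2p-1$. The first form has no $\sigma$ factor, so it cannot equal $\r_0=s_1\tau\sigma$. Matching against the second form forces $\tau^{i+1}=\tau$ and $s_1=\b^{i+1}$; since $s_1\in A$ and $\b\notin A$, this requires $p\mid i+1$, whence $\b^{i+1}=1$ and thus $s_1=1$, contradicting $s_1\ne 1$. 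This intersection step is the main obstacle because it is the only place where the structural information about $P$---specifically, that $s_1$ and $\b$ project to distinct cosets of $A$---is genuinely used, and where careful index tracking within $\{0,\ldots,2p-1\}$ is required; everything else is bookkeeping.
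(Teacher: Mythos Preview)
Your overall strategy is correct and matches the paper's: verify that $(G,\{\r_0,\r_1,\r_2\})$ is an sggi, compute the Schl\"afli type, check generation, and establish the intersection property via the maximality of $\lg\r_1\rg$ in $\lg\r_0,\r_1\rg\cong D_{2p}$. However, you have inverted the difficulty of the two nontrivial steps.

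The paper disposes of the intersection property in one line, without listing elements: if $\lg\r_0,\r_1\rg\cap\lg\r_1,\r_2\rg>\lg\r_1\rg$ then $\lg\r_0,\r_1\rg\le\lg\r_1,\r_2\rg$, whence $G=\lg\r_1,\r_2\rg$ has order $4p$, contradicting $|G|=4p^m$ with $m\ge 3$. Your element-listing argument is correct but unnecessary; the order obstruction is all that is needed.

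By contrast, the step you label ``routine'' --- that $\sigma$ extends to an automorphism of $P$ --- is where the real work lies, and it does \emph{not} follow from Proposition~\ref{commutator} and $A$ abelian alone. One must verify the last relation $[s_{p-1}^{\sigma},\b^{\sigma}]=\prod_{i=1}^{p-1}(s_i^{\sigma})^{-\binom{p}{i}}$, and since $s_k^{\sigma}=(s_k^{\b^{1-k}})^{\pm 1}$ involves conjugation by a nontrivial power of $\b$, expanding both sides produces products of $s_1,\ldots,s_{p+1}$ whose exponents are alternating sums of products of binomial coefficients. The paper devotes Lemma~\ref{lemma1} and a full page of computation (including a table and several identities from Proposition~\ref{enumeration}) to matching these exponents. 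The $\tau$ case is indeed routine; the $\sigma$ case is the main obstacle, not the intersection property.
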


Obviously, Theorem~\ref{maintheorem3} is a corollary of the theorem above.

Let $s_{k+1}:=[s_{k}, \b]$ for $k = p-1$ and $p$. Then

$$s_p=[s_{p-1}, \b]=\prod\limits_{i=1}^{p-1}s_i^{-\tbinom{p}{i}},$$
$$s_{p+1}=[s_p,\b]=s_p^{-1}s_p^{\b}=\prod\limits_{i=1}^{p-1}(s_i^{-1})^{-\tbinom{p}{i}}\cdot\prod\limits_{i=1}^{p-1}(s_i^{\beta})^{-\tbinom{p}{i}}$$
$$=\prod\limits_{i=1}^{p-1}(s_i^{-1}s_i^{\beta})^{-\tbinom{p}{i}}=\prod\limits_{i=1}^{p-1}[s_i, \beta]^{-\tbinom{p}{i}}=\prod\limits_{i=1}^{p-1}s_{i+1}^{-\tbinom{p}{i}}.$$

In order to prove Theorem~\ref{maintheorem3}, we first need to prove the following two lemmas.

\begin{lem}\label{lemma1}
For any $1 \leq k \leq p$, we have
\begin{itemize}
\item [{\rm(1)}]$s_k^{\b^{1-k}}=\prod\limits_{i=k}^{p+1}s_i^{\tbinom{p+1-k}{i-k}}$;
\item [{\rm(2)}]$s_{k}^{\sigma}=\left\{
\begin{array}{ll}
s_k^{\b^{1-k}},   & k  \mbox{ is odd,} \\
(s_k^{\b^{1-k}})^{-1},  & k   \mbox{ is even.}
\end{array}
\right.$

\end{itemize}
\end{lem}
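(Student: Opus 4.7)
The plan is to prove (1) by establishing a more general closed formula for $s_k^{\b^j}$ by induction on $j$ and then specializing, and to prove (2) by induction on $k$ using the recursion $s_{k+1}=[s_k,\b]$ together with the fact that $\sigma$ is an automorphism sending $\b$ to $\b^{-1}$.

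For (1), I would first establish by induction on $j$ (for $0 \leq j \leq p+1-k$) the stronger identity
$$s_k^{\b^j} = \prod_{i=0}^{j} s_{k+i}^{\tbinom{j}{i}}.$$
The case $j=0$ is immediate. For the inductive step, since $A$ is an abelian normal subgroup of $P$, one has $(s_k^{\b^j})^{\b} = \prod_{i=0}^{j}(s_{k+i}^{\b})^{\tbinom{j}{i}} = \prod_{i=0}^{j}(s_{k+i} s_{k+i+1})^{\tbinom{j}{i}}$, where the relation $s_{k+i}^{\b}=s_{k+i}s_{k+i+1}$ is available throughout the required range $k\leq k+i\leq p$ (from the defining relations for $k+i\leq p-2$ and from the definitions of $s_p$, $s_{p+1}$ for $k+i=p-1,p$). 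Re-indexing the ``$s_{k+i+1}$'' part and applying the Pascal identity $\tbinom{j}{i}+\tbinom{j}{i-1}=\tbinom{j+1}{i}$ from Proposition~\ref{enumeration}(2) then produces the formula for $j+1$. Specializing at $j=p+1-k$ and using $\b^{p+1-k}=\b^{1-k}$ (since $\b^p=1$) gives (1).

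For (2), I proceed by induction on $k$. Let $\epsilon_k:=+1$ if $k$ is odd and $\epsilon_k:=-1$ if $k$ is even, so the target identity reads $s_k^{\sigma}=(s_k^{\b^{1-k}})^{\epsilon_k}$. The base $k=1$ is $s_1^{\sigma}=s_1=s_1^{\b^{0}}$ by the definition of $\sigma$. For the inductive step, since $\sigma$ is an automorphism, $s_{k+1}^{\sigma}=[s_k,\b]^{\sigma}=[s_k^{\sigma},\b^{-1}]$. Writing $u:=s_k^{\b^{1-k}}\in A$, the induction hypothesis says $s_k^{\sigma}=u^{\epsilon_k}$. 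As $A\unlhd P$ is abelian and $[u,\b^{-1}]\in A$, Proposition~\ref{commutator} yields $[u^{\epsilon_k},\b^{-1}]=[u,\b^{-1}]^{\epsilon_k}$, and
$$[u,\b^{-1}] = u^{-1}u^{\b^{-1}} = (s_k^{\b^{1-k}})^{-1} s_k^{\b^{-k}}.$$
On the other hand, conjugating the relation $s_{k+1}=s_k^{-1}s_k^{\b}$ by $\b^{-k}$ gives
$$s_{k+1}^{\b^{-k}} = (s_k^{\b^{-k}})^{-1} s_k^{\b^{1-k}},$$
so $[u,\b^{-1}]=(s_{k+1}^{\b^{-k}})^{-1}=(s_{k+1}^{\b^{1-(k+1)}})^{-1}$. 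Combining,
$$s_{k+1}^{\sigma} = \bigl(s_{k+1}^{\b^{1-(k+1)}}\bigr)^{-\epsilon_k} = \bigl(s_{k+1}^{\b^{1-(k+1)}}\bigr)^{\epsilon_{k+1}},$$
which closes the induction.

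The main obstacle is the sign bookkeeping in Part (2) while handling the ``negative'' (mod $p$) powers of $\b$; once the identification $[u,\b^{-1}]=(s_{k+1}^{\b^{-k}})^{-1}$ is in hand, the rest collapses to a short check. A secondary subtlety in Part (1) is ensuring that the subscripts $k+i+1$ encountered during the induction never exceed $p+1$, so that the relation $s_{k+i}^{\b}=s_{k+i}s_{k+i+1}$ remains defined throughout the argument.
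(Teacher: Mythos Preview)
Your argument is correct and follows essentially the same path as the paper's: Part~(1) is the same iterated-conjugation computation (you make the induction on $j$ explicit where the paper writes ``$\cdots$''), and Part~(2) is the same induction on $k$ via $s_{k+1}^{\sigma}=[s_k^{\sigma},\b^{-1}]$ together with commutator manipulations inside the abelian normal subgroup $A$, your sign $\epsilon_k$ merely unifying the paper's separate odd/even case split. One caveat on wording: in the paper this lemma \emph{precedes} the proof that $\sigma\in\Aut(P)$, so rather than invoking ``$\sigma$ is an automorphism'' you should (as the paper implicitly does) regard $s_k^{\sigma}$ as \emph{defined} recursively by $s_1^{\sigma}=s_1$ and $s_{k+1}^{\sigma}:=[s_k^{\sigma},\b^{-1}]$; the logic of your proof is unchanged by this rephrasing.
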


\begin{proof}
(1) Since $s_{k+1}=[s_k, \b]$, we have $s_k^{\b}=s_ks_{k+1}$.
Since $o(\b)=p$ and $k-1 \leq p$, we have $p-k+1 \geq 0$.
It follows that
$$s_k^{\b^{1-k}}=s_k^{\b^{p+1-k}}=(s_ks_{k+1})^{\b^{p-k}}=(s_ks_{k+1}^2s_{k+2})^{\b^{p-k-1}}=\cdots $$
$$=s_k^{\tbinom{p+1-k}{0}}s_{k+1}^{\tbinom{p+1-k}{1}}s_{k+2}^{\tbinom{p+1-k}{2}}\cdots s_{p}^{\tbinom{p+1-k}{p-k}}s_{p+1}^{\tbinom{p+1-k}{p+1-k}}.$$

\f (2) The proof uses an induction on $k$. For $k=1$, we have $s_1^{\sigma}=s_1$. For $k=2$, by Proposition~\ref{commutator}, we have $$s_2^{\sigma}
=[s_1, \b]^{\sigma}=[s_1, \b^{-1}]=([s_1, \b]^{-1})^{\b^{-1}}=(s_2^{-1})^{\b^{1-2}}.$$

 Hence this lemma is true for $k=1$ and $2$. We now use induction on $k$ and Proposition~\ref{commutator}, splitting the analysis in two cases, namely the case where $k$ is odd and the case where $k$ is even.
 
If $k$ is odd, then $$s_{k}^{\sigma}=[s_{k-1}^{\sigma}, \b^{\sigma}]=[(s_{k-1}^{\b^{2-k}})^{-1}, \b^{-1}]=[s_{k-1}^{-1}, \b^{-1}]^{\b^{2-k}}=([s_{k-1}, \b]^{\b^{-1}s_{k-1}^{-1}})^{\b^{2-k}}.$$

Recall that $A=\lg s_1, s_2, \cdots, s_{p-1}\rg$ is an abelian maximal subgroup of $G$ and $A \unlhd G$.

It follows that $s_k \in A$ and $s_k^{\b} \in A$ for any $k \geq 1$. Thus,
$$s_{k}^{\sigma}=([s_{k-1}, \b]^{\b^{-1}s_{k-1}^{-1}})^{\b^{2-k}}=s_k^{(s_{k-1}^{-1})^{\b}\b^{1-k}}=s_k^{\b^{1-k}}.$$
If $k$ is even, then
$$s_{k}^{\sigma}=[s_{k-1}^{\sigma}, \b^{\sigma}]=[s_{k-1}^{\b^{2-k}}, \b^{-1}]=[s_{k-1}, \b^{-1}]^{\b^{2-k}}=(([s_{k-1}, \b]^{-1})^{\b^{-1}})^{\b^{2-k}}=(s_k^{\b^{1-k}})^{-1}.$$
This finishes the proof.
\end{proof}

\begin{lem}
$\sigma, \tau \in \Aut(P)$, and $\lg \sigma, \tau \rg \cong \mathbb{Z}_2 \times \mathbb{Z}_2$.
\end{lem}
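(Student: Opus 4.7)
My plan is to prove the lemma in three stages: verify that $\tau$ and $\sigma$ are each automorphisms of $P$ of order $2$, then show they commute and are distinct. Throughout, I would use the observation that $A=\lg s_1,\ldots,s_{p-1}\rg$ is an abelian normal subgroup of $P$ with $A\cap\lg\b\rg=1$ (since $A$ is an abelian maximal subgroup of the non-abelian group $P$, forcing $\b\notin A$), so $P=A\rtimes\lg\b\rg$. This semidirect-product decomposition will let me define each map globally and avoid verifying every relation in the presentation of $P$.

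For $\tau$, I would define $\tau(a\b^i)=a^{-1}\b^i$ for $a\in A$ and $0\le i\le p-1$. Since $A$ is abelian, inversion is an automorphism of $A$ that commutes with any group automorphism, in particular with conjugation by $\b$. A short direct computation
$$\tau((a\b^i)(b\b^j))=\tau(ab^{\b^{-i}}\b^{i+j})=(ab^{\b^{-i}})^{-1}\b^{i+j}=a^{-1}b^{-\b^{-i}}\b^{i+j}=\tau(a\b^i)\tau(b\b^j)$$
then shows $\tau$ is a homomorphism. Clearly $\tau^2=\mathrm{id}$ and $\tau\neq\mathrm{id}$, so $\tau\in\Aut(P)$ with $o(\tau)=2$; restricting to the generators $s_1,\b$ yields the definition given in the statement.

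For $\sigma$, I would first use Lemma~\ref{lemma1}(2) to define $\sigma$ on $A$ by setting $s_k^\sigma$ to the stated expression and extending multiplicatively (which is unambiguous since $A$ is abelian), then extend to $P$ by $\sigma(a\b^i)=\sigma(a)\b^{-i}$. To show this is a homomorphism, it suffices to establish the compatibility condition $\sigma(a^\b)=\sigma(a)^{\b^{-1}}$ for every $a\in A$, equivalently for each generator $a=s_k$. Writing $s_k^\b=s_ks_{k+1}$ (from $s_{k+1}=[s_k,\b]$) and applying Lemma~\ref{lemma1}(1)–(2), the compatibility condition collapses (after conjugating by $\b^k$) to the single identity $s_k^\b\,s_k^{-1}=s_{k+1}$, which is just the defining relation again; the cases $k$ odd and $k$ even are handled symmetrically, and at the top end $k=p-1$ one additionally invokes Lemma~\ref{lemma1}(1) to rewrite $s_p$ via the given product. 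Finally, $\sigma^2=\mathrm{id}$ needs only to be checked on the generating set $\{s_1,\b\}$ of $P$, where $\sigma^2(s_1)=s_1$ and $\sigma^2(\b)=\b$ are immediate. Hence $\sigma\in\Aut(P)$ with $o(\sigma)=2$.

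To conclude, checking $\sigma\tau=\tau\sigma$ reduces to evaluating both sides on $s_1$ and $\b$: both $(\sigma\tau)(s_1)=(\tau\sigma)(s_1)=s_1^{-1}$ and $(\sigma\tau)(\b)=(\tau\sigma)(\b)=\b^{-1}$. Since $\sigma(\b)=\b^{-1}\ne\b=\tau(\b)$, the automorphisms $\sigma$ and $\tau$ are distinct, nontrivial, of order $2$, and commute, so $\lg\sigma,\tau\rg\cong\mz_2\times\mz_2$. The main obstacle is the compatibility verification for $\sigma$: one must carry out the bookkeeping of $\b$-conjugates of $s_k$ using the binomial-coefficient formula of Lemma~\ref{lemma1}(1), but after cancellation the computation reduces to the single commutator relation defining $s_{k+1}$, so no new combinatorial identity is required.
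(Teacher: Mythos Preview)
Your semidirect-product strategy is a genuine alternative to the paper's relation-by-relation verification, and for $\tau$ it works cleanly: inversion on the abelian group $A$ commutes with every automorphism of $A$, in particular with conjugation by $\b$, so the compatibility $\tau(a^{\b})=\tau(a)^{\b}$ is automatic and your argument is shorter than the paper's.

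For $\sigma$, however, there is a real gap at $k=p-1$. For $1\le k\le p-2$ your reduction is correct: writing $s_k^{\b}=s_ks_{k+1}$ and using the formula of Lemma~\ref{lemma1}(2) for both $\sigma(s_k)$ and $\sigma(s_{k+1})$, the compatibility $\sigma(s_k^{\b})=\sigma(s_k)^{\b^{-1}}$ does, after conjugating by $\b^k$, collapse to the defining relation $[s_k,\b]=s_{k+1}$. But when $k=p-1$ the element $s_p=[s_{p-1},\b]$ is \emph{not} one of the chosen generators of $A$: it is given by the relation $s_p=\prod_{i=1}^{p-1}s_i^{-\binom{p}{i}}$, so your map $\sigma|_A$ (extended multiplicatively) sends it to $\prod_{i=1}^{p-1}\sigma(s_i)^{-\binom{p}{i}}$. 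The compatibility condition at $k=p-1$ then unwinds to precisely the identity
\[
\prod_{i=1}^{p-1}(s_i^{\sigma})^{-\binom{p}{i}} \;=\; s_p^{\b^{1-p}} \;=\; s_ps_{p+1},
\]
and this is exactly the binomial-coefficient computation that the paper carries out using Table~\ref{tableU} and Proposition~\ref{enumeration}. You cannot shortcut it by invoking Lemma~\ref{lemma1}(2) at $k=p$ to read off $s_p^{\sigma}=s_p^{\b^{1-p}}$, because the proof of that case already assumes $\sigma$ is a homomorphism (it uses $[s_{p-1},\b]^{\sigma}=[s_{p-1}^{\sigma},\b^{\sigma}]$); quoting it here would be circular. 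In short, the semidirect-product framing reorganises the verification but does not eliminate its one substantive step, so the claim that ``no new combinatorial identity is required'' is not correct.
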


\begin{proof} Obviously, $\sigma^2=\tau^2=1$ and $\sigma\tau=\tau\sigma$. Therefore $\lg \sigma, \tau\rg \cong \mathbb{Z}_2 \times \mathbb{Z}_2$. 
It remains to prove that $\sigma, \tau \in \Aut(P)$.

We first prove that $\sigma \in \Aut(P)$. Obviously, $\lg s_1^{\s}, \b^{\s}\rg=\lg s_1, \b^{-1}\rg=P$. Thus, we only need to show that the generating set $\{s_1^{\s}, s_2^{\s}, \cdots, s_{p-1}^{\s}, \b^{\s}\}$
of $P$ satisfies
the same relations as $\{s_1, s_2, \cdots, s_{p-1}, \b\}$ in order to conclude that $\sigma \in \Aut(P)$.

Recall that $A=\lg s_1, s_2, \cdots, s_{p-1}\rg$ is abelian and $A \unlhd P$. By Lemma~\ref{lemma1}(2),
$$s_{i}^{\sigma}=\left\{
\begin{array}{ll}
s_i^{\b^{1-i}},   & i \mbox{ is odd,} \\
(s_i^{\b^{1-i}})^{-1},  & i \mbox{ is even.}
\end{array}
\right.$$
Now $o(s_i)=o(s_i^{\s})$, $[s_i^{\s}, s_j^{\s}]=1$ for $1 \leq i, j \leq p-1$.
Moreover, it is easy to see that $s_{k+1}^{\sigma}=[s_k^{\s}, \b^{\s}]$ for $1\leq k \leq p-2$. Thus we only need to show that
$$[s_{p-1}^{\sigma}, \b^{\sigma}]=(s_1^{\sigma})^{-\tbinom{p}{1}}(s_2^{\sigma})^{-\tbinom{p}{2}}(s_3^{\sigma})^{-\tbinom{p}{3}}\cdots (s_{p-1}^{\sigma})^{-\tbinom{p}{p-1}},$$
that is, $s_p^{\sigma}=\prod\limits_{i=1}^{p-1}(s_i^{\sigma})^{-\tbinom{p}{i}}.$

By Lemma~\ref{lemma1}(1), we have
$$s_1^{\sigma}=s_1, s_2^{\sigma}=\prod\limits_{i=2}^{p+1}s_i^{-\tbinom{p-1}{i-2}},
s_3^{\sigma}=\prod\limits_{i=3}^{p+1}s_i^{\tbinom{p-2}{i-3}}, \cdots, s_{p-1}^{\sigma}=
\prod\limits_{i=p-1}^{p+1}s_i^{-\tbinom{2}{i-(p-1)}}.$$

\f Then $\prod\limits_{i=1}^{p-1}(s_i^{\sigma})^{-\tbinom{p}{i}}
=s_1^{u_1}s_2^{u_2}\cdots s_{p+1}^{u_{p+1}}$, where $u_i$ comes from Table~\ref{tableU}.

\begin{table}[]
    \centering
\setlength{\arraycolsep}{5pt}
\begingroup
\renewcommand*{\arraystretch}{1.8}
$$
 \begin{bmatrix}

   \begin{array}{c | c|cccccc}
% \tbinom{p}{1}                              & 0                                                & 0\\
*            &(s_1^{\sigma})^{-\tbinom{p}{1}}   &(s_2^{\sigma})^{-\tbinom{p}{2}}   &(s_3^{\sigma})^{-\tbinom{p}{3}}   &(s_4^{\sigma})^{-\tbinom{p}{4}}       &\cdots &\cdots &(s_{p-1}^{\sigma})^{-\tbinom{p}{p-1}}\\ \hline
u_1       &-\tbinom{p}{1}      & 0                                                & 0                                                     &0 &\cdots &\cdots         &0                               \\ \hline
u_2       &0&\tbinom{p-1}{0}\tbinom{p}{2}      & 0                                                & 0                                                     &\cdots &\cdots &0                                      \\
u_3        &0&\tbinom{p-1}{1}\tbinom{p}{2}      & -\tbinom{p-2}{0}\tbinom{p}{3}      & 0                                                    &\cdots &\cdots &0                                       \\
u_4        &0&\tbinom{p-1}{2}\tbinom{p}{2}      & -\tbinom{p-2}{1}\tbinom{p}{3}      & \tbinom{p-3}{0}\tbinom{p}{4}        &\cdots &\cdots &0                                        \\
u_5      &0&\tbinom{p-1}{3}\tbinom{p}{2}      & -\tbinom{p-2}{2}\tbinom{p}{3}      & \tbinom{p-3}{1}\tbinom{p}{4}        &\cdots &\cdots &0                                     \\
\vdots    &\vdots&\vdots                                          & \vdots                                         & \vdots                                            &\cdots &\cdots &  \vdots                             \\
\vdots    &\vdots&\vdots                                          & \vdots                                         & \vdots                                            &\cdots &\cdots &   \vdots                                \\
u_{p-1}  &0&\tbinom{p-1}{p-3}\tbinom{p}{2}   & -\tbinom{p-2}{p-4}\tbinom{p}{3}   & \tbinom{p-3}{p-5}\tbinom{p}{4}     &\cdots &\cdots &\tbinom{2}{0}\tbinom{p}{p-1}   \\ \hline
u_p     &0&\tbinom{p-1}{p-2}\tbinom{p}{2}   & -\tbinom{p-2}{p-3}\tbinom{p}{3}   & \tbinom{p-3}{p-4}\tbinom{p}{4}     &\cdots &\cdots &\tbinom{2}{1}\tbinom{p}{p-1} \\ \hline
u_{p+1}&0&\tbinom{p-1}{p-1}\tbinom{p}{2}   & -\tbinom{p-2}{p-2}\tbinom{p}{3}   & \tbinom{p-3}{p-3}\tbinom{p}{4}     &\cdots &\cdots &\tbinom{2}{2}\tbinom{p}{p-1}

  \end{array}

  \end{bmatrix}
$$
\endgroup
    \caption{The values of $u_i$.}
    \label{tableU}
\end{table}

Obviously, $u_1=-\tbinom{p}{1}$. By Proposition~\ref{enumeration}(1), we have
$$u_{p+1}=\sum\limits_{j=2}^{p-1}(-1)^{j}\tbinom{p}{j}=\tbinom{p}{p-1}+
\sum\limits_{j=2}^{\frac{p-3}{2}}\left[(-1)^{j}\tbinom{p}{j}+(-1)^{p-j}\tbinom{p}{p-j}\right]=\tbinom{p}{p-1}=p.$$
\f For $2 \leq i \leq p-1$, we have $u_i=\sum\limits_{j=2}^{i}(-1)^j\tbinom{p+1-j}{i-j}\tbinom{p}{j}$.
For $i=p$, we have $$u_p-\tbinom{1}{0}\tbinom{p}{p}=u_p-1=
\sum\limits_{j=2}^{p}(-1)^j\tbinom{p+1-j}{p-j}\tbinom{p}{j}.$$

By Proposition~\ref{enumeration}$(2)$$(3)$, we have
$$\tbinom{p+1-j}{i-j}\tbinom{p}{j}=\left[\tbinom{p-j}{i-j}+\tbinom{p-j}{i-j-1}\right]\tbinom{p}{j}
=\tbinom{p-j}{i-j}\tbinom{p}{j}+\tbinom{p-j}{i-j-1}\tbinom{p}{j}=
\tbinom{p}{i}\tbinom{i}{j}+\tbinom{p}{i-1}\tbinom{i-1}{j}.$$
Together with Proposition~\ref{enumeration}(4), for $2 \leq i \leq p$, we have
$$\sum\limits_{j=2}^{i}(-1)^j\left[\tbinom{p}{i}\tbinom{i}{j}+\tbinom{p}{i-1}\tbinom{i-1}{j}\right]$$
$$=\tbinom{p}{i}\left[i-1+\sum\limits_{j=0}^{i}(-1)^j\tbinom{i}{j}\right]+
\tbinom{p}{i-1}\left[i-2+\sum\limits_{j=0}^{i}(-1)^j\tbinom{i-1}{j}\right]$$
$$=\tbinom{p}{i}\left[i-1+(-1)^i\tbinom{i-1}{i}\right]+
\tbinom{p}{i-1}\left[i-2+(-1)^i\tbinom{i-2}{i}\right]$$
$$=\tbinom{p}{i}(i-1)+\tbinom{p}{i-1}(i-2)$$
$$=-\tbinom{p}{i}+(p-i+1)\tbinom{p}{i-1}+(i-2)\tbinom{p}{i-1}$$
$$=-\tbinom{p}{i}
+(p-1)\tbinom{p}{i-1},$$
and hence
$$u_p=-\tbinom{p}{p}
+(p-1)\tbinom{p}{p-1}+1=(p-1)\tbinom{p}{p-1},$$
$$u_i=-\tbinom{p}{i}
+(p-1)\tbinom{p}{i-1}, \quad 2 \leq i \leq p-1.$$

By Lemma~\ref{lemma1} and the fact that $s_{p+1}=[s_p,\b]$, we have $s_p^{\sigma}=s_p^{\b^{1-p}}=s_p^{\b}=s_ps_{p+1}.$
The result is now immediate since $$s_1^{u_1}s_2^{u_2}\cdots s_p^{u_p}s_{p+1}^{u_{p+1}}$$
$$=s_1^{-\tbinom{p}{1}}\left[\prod\limits_{i=2}^{p-1}s_i^{-\tbinom{p}{i}
+(p-1)\tbinom{p}{i-1}}\right]s_{p}^{(p-1)\tbinom{p}{p-1}}s_{p+1}^{p}$$
$$=\left[\prod\limits_{i=1}^{p-1}s_i^{-\tbinom{p}{i}}\right]\left[(\prod\limits_{i=2}^{p}s_i^{-\tbinom{p}{i-1}})^{1-p}\right]s_{p+1}^p$$
$$=s_ps_{p+1}^{1-p}s_{p+1}^{p}=s_{p}s_{p+1}=s_p^{\sigma}.$$

Let us now prove that $\tau \in \Aut(P)$. Since $A$ is abelian and $s_{i+1}=[s_i, \b]$ for $1 \leq i \leq p-2$, by Proposition~\ref{commutator}, we have
$$s_{i+1}^{\tau}=[s_i, \b]^{\tau}=[s_i^{-1}, \b]=(([s_i, \b])^{-1})^{s_i^{-1}}=(s_{i+1}^{-1})^{s_i^{-1}}=s_{i+1}^{-1}.$$

We need to show that the generating set $\{s_1^{-1}, s_2^{-1}, \cdots, s_{p-1}^{-1}, \b\}$ of $P$ satisfies
the same relations as $\{s_1, s_2, \cdots, s_{p-1}, \b\}$.
Obviously,  $o(s_i)=o(s_i^{\tau})$, $[s_i^{\tau}, s_j^{\tau}]=1$ for $1 \leq i, j \leq p-1$.
Also, we have $$[s_k, \b]^{\tau}=[s_k^{\tau}, \b^{\tau}]=[s_{k}^{-1},\ \b]=([s_k, \b]^{-1})^{s_k^{-1}}=s_{k+1}^{-1}=s_{k+1}^{\tau}\ \mbox{for}\ 1\leq k \leq p-2.$$

Now we only need to show that
$$[s_{p-1}^{\tau}, \b^{\tau}]=(s_1^{\tau})^{-\tbinom{p}{1}}(s_2^{\tau})^{-\tbinom{p}{2}}(s_3^{\tau})^{-\tbinom{p}{3}}\cdots (s_{p-1}^{\tau})^{-\tbinom{p}{p-1}}.$$
It is easy to see that $$[s_{p-1}^{\tau}, \b^{\tau}]=[s_{p-1}^{-1}, \b]=([s_{p-1}, \b]^{-1})^{s_{p-1}^{-1}}=(s_{p}^{-1})^{s_{p-1}^{-1}}=s_{p}^{-1}.$$ On the other hand,
$$\prod\limits_{i=1}^{p-1}(s_i^{\tau})^{-\tbinom{p}{i}}=\prod\limits_{i=1}^{p-1}(s_i^{-1})^{-\tbinom{p}{i}}
=(\prod\limits_{i=1}^{p-1}s_i^{-\tbinom{p}{i}})^{-1}=s_p^{-1}.$$
It follows that $\tau \in \Aut(P)$.
\end{proof}

We are now ready to prove Theorem~\ref{maintheorem4}.

\begin{proof}[Proof of Theorem~\ref{maintheorem4}]
Let $\r_0:=s_1\tau\sigma$, $\r_1:=\b\tau\sigma$ and $\r_2:=\sigma$. Since $o(\b)=p, o(\tau)=2$ and $[\b, \tau]=1$, we have
$(\r_1\r_2)^p=(\b\tau)^p=\tau$, and hence $\b=(\r_1\r_2)^{p+1}$. It follows that $G=\lg \r_0, \r_1, \r_2\rg$.

Note that
$$\r_0^2=s_1\tau\sigma s_1\tau\sigma=s_1s_1^{\tau\sigma}=s_1s_1^{-1}=1,$$
$$\r_1^2=\b\tau\sigma\b\tau\sigma=\b\b^{\sigma\tau}=\b\b^{-1}=1,$$
$$(\r_0\r_2)^2=(s_1\tau\sigma\sigma)^2=s_1s_1^{\tau}=s_1s_1^{-1}=1.$$
The pair $(G, \{\r_0, \r_1, \r_2\})$ is thus an {\em sggi}. We still need to prove that $(G, \{\r_0, \r_1, \r_2\})$ is a string C-group, or in other words, that it satisfies the intersection property.

Note that for any $x \notin \lg s_1, s_2, \cdots, s_{p-1}\rg$, we have $o(x)=p$.
Since $$\r_0\r_1=s_1\tau\sigma\b\tau\sigma=s_1\b^{-1} \notin \lg s_1, s_2, \cdots, s_{p-1}\rg,$$  $o(\r_0\r_1)=p$.
Since $(\r_1\r_2)^2=(\b\tau\sigma\tau)^2=(\b\sigma)^2=\b^2$, $o(\r_1\r_2)=2p$.
If follows that $|\lg \r_0, \r_1\rg|=2p$ and $|\lg \r_1, \r_2\rg|=4p$.

Finally, if $\lg \r_0, \r_1\rg \cap \lg \r_1, \r_2\rg >\lg \r_1\rg$, then $\lg \r_0, \r_1\rg \leq \lg \r_1, \r_2\rg$ as $\lg \r_1 \rg$ is a maximal subgroup of $\lg \r_0, \r_1\rg$.
It means that $G=\lg \r_1, \r_2\rg$, which is impossible because $|G|=4p^m>4p$.
Thus, $\lg \r_0, \r_1\rg \cap \lg \r_1, \r_2\rg=\lg \r_1\rg$.
By Proposition~\ref{intersection}, $(G, \{\r_0, \r_1, \r_2\})$ is a string C-group
with type $\{p, 2p\}$. 
\end{proof}

\section{Acknowledgements} This work was supported by the National Natural Science Foundation of China (12201371,
12331013,12311530692,12271024,12161141005), the 111 Project of China (B16002), the Fundamental Research Program of Shanxi Province 20210302124078, an Action de Recherche Concertée grant of the Communauté Française Wallonie Bruxelles and a PINT-BILAT-M grant from the Fonds National de la Recherche Scientifique de Belgique (FRS-FNRS).

\end{document}